\newtheorem{thm}{Theorem}
\newtheorem{lem}{Lemma}
\newcommand{\expect}[1]{\mathbb{E}\left[#1\right]}
\newcommand{\norm}[1]{||{#1}||}
\newcommand{\script}[1]{{{\cal{#1} }}}
\newcommand{\transpose}{\top}
\begin{document}

\title
  {Optimal Convergence and Adaptation for Utility Optimal Opportunistic Scheduling}
\author{Michael J. Neely \\ University of Southern California\\ http://www-rcf.usc.edu/$\sim$mjneely\\
}

\markboth{}{Neely}

\maketitle

\begin{abstract} 
This paper considers the fundamental convergence 
time for opportunistic scheduling over time-varying channels. 
The channel state probabilities are unknown and algorithms must
perform some type of estimation and learning while they 
make decisions to optimize network utility.  Existing schemes can 
achieve a utility within $\epsilon$ of optimality, for any desired
$\epsilon>0$,  with convergence
and adaptation times of $O(1/\epsilon^2)$.  This paper shows that 
if the utility function is concave and smooth, then  
$O(\log(1/\epsilon)/\epsilon)$ convergence time is possible
via an existing stochastic variation on the Frank-Wolfe algorithm, 
called the RUN algorithm. 
Next, a converse result is proven to show it is impossible for any algorithm 
to have convergence time better than $O(1/\epsilon)$, provided the algorithm
has no a-priori knowledge of channel state probabilities.  Hence, RUN is 
within a logarithmic factor of convergence time optimality. 
However, RUN has a vanishing stepsize and hence has an 
infinite adaptation time.  Using stochastic Frank-Wolfe with a fixed stepsize
yields improved $O(1/\epsilon^2)$ adaptation 
time, but convergence time increases to $O(1/\epsilon^2)$, similar
to existing drift-plus-penalty based algorithms.  This raises important open questions
regarding optimal adaptation. 
\end{abstract}

\section{Formulation} 

This paper treats opportunistic scheduling for multiple wireless users. 
Consider a wireless system with $n$ users that transmit over their own links.  
The
system operates over slotted time $t \in \{0, 1, 2, \ldots\}$.  The wireless channels can change over time and this affects the 
set of transmission rates available for scheduling.  Specifically, let $\{S[t]\}_{t=0}^{\infty}$ be a process of independent and identically distributed (i.i.d.) \emph{channel state vectors} that take values in some set $\script{S} \subseteq \mathbb{R}^m$, where $m$ is a positive integer.\footnote{The value $m$ can be different from $n$ in the case when the number of channel state parameters is different from the number of links, such as for multi-antenna or multi-subband systems where each link consists of multiple channels.} 
The channel vectors have a probability distribution function $F_S(s) = P[S[t] \leq s]$ for all $s \in \mathbb{R}^m$. However, this distribution function is unknown. 
Every slot $t$, the network controller observes the current $S[t]$ and chooses a \emph{transmission rate vector} $\mu[t] = (\mu_1[t], \ldots, \mu_n[t])$ from 
a set $\Gamma_{S[t]}$.  That is, the set $\Gamma_{S[t]}$ of transmission rate vectors available on slot $t$ depends on the observed  $S[t]$. This is called \emph{opportunistic scheduling} because the network controller can choose to transmit with larger rates on links with currently good channel conditions. The set $\Gamma_{S[t]}$ is typically nonconvex (for example, it might have only a finite number of points). 
 It is assumed that $\Gamma_{S[t]} \subseteq \script{B}$ for all $t \in \{0, 1, 2, \ldots\}$, 
where $\script{B}$ is a bounded $n$-dimensional box  within 
$\mathbb{R}^n$.

For each integer $T>0$, define the time average transmission rate vector $\overline{\mu}[T]$ by: 
$$ \overline{\mu}[T] = \frac{1}{T}\sum_{t=0}^{T-1} \mu[t] $$
The goal is to make decisions over time to maximize the limiting \emph{network utility}:\footnote{The $\liminf$ is used to formally allow algorithms that do not necessarily have regular limits. It represents the smallest possible limiting value over any convergent 
subsequence.  The $\liminf$ always exists   
and is the same as a regular limit whenever the regular limit exists.} 
\begin{align}
\mbox{Maximize:} \quad &  \liminf_{T\rightarrow\infty} \phi(\expect{\overline{\mu}[T]}) \label{eq:p1}  \\
\mbox{Subject to:} \quad & \mu[t] \in \Gamma_{S[t]} \quad, \forall t \in \{0, 1, 2, \ldots\}  \label{eq:p2} 
\end{align}
where $\phi:\script{B}\rightarrow\mathbb{R}$ is a concave \emph{network utility function}  
that is entrywise nondecreasing.  The expectation in the above problem is with respect to the random channel state vectors and
the potentially randomized decision rule for choosing $\mu[t] \in \Gamma_{S[t]}$ on each slot $t$. The above problem 
is particularly challenging because the channel 
state distribution function $F_S$ is unknown.  Algorithms designed without knowledge of $F_S$ are called \emph{statistics-unaware} algorithms. 

This paper considers the \emph{convergence time} required for a statistics-unaware algorithm  
to come within an $\epsilon$-approximation of the optimal utility, where optimality considers 
all algorithms, including those with perfect knowledge of $F_S$.   It is shown that no statistics-unaware algorithm 
can guarantee an $\epsilon$-approximation with convergence time faster than $O(1/\epsilon)$.  Further, it is shown 
that a variation on the Frank-Wolfe algorithm with a running average, 
called RUN, achieves this convergence bound to within a logarithmic factor.  However, this performance holds when starting the time averages at time 0 and using a vanishing stepsize. This raises important questions of \emph{adaptation} over arbitrary intervals of time. 

Problem \eqref{eq:p1}-\eqref{eq:p2} is also important in the special case when there is no time variation so that $\mu[t]$ is chosen every slot from the same fixed set $\Gamma$ (where $\Gamma$ is possibly nonconvex). In this special case, the algorithms considered here 
allow computation of the fractions of time to choose different points in $\Gamma$ to ensure an $\epsilon$-approximation to optimal utility. 

\subsection{Convergence and adaptation definitions} \label{section:convergence-def} 

Define $\phi^{opt}$ as the optimal utility value for problem \eqref{eq:p1}-\eqref{eq:p2}. Fix $\epsilon>0$.  An algorithm is said to 
achieve an \emph{$\epsilon$-approximation with convergence time $C$} if: 
$$ \phi(\expect{\overline{\mu}[T]}) \geq \phi^{opt} - \epsilon \quad, \forall T \geq C $$
An algorithm is said to achieve an \emph{$O(\epsilon)$-approximation with convergence time $O(C)$} if the above holds with $\epsilon$ and $C$ replaced by  constant multiples of $\epsilon$ and $C$.  

Convergence time only considers behavior starting from slot $t=0$.  It is important to consider behavior over \emph{any} interval of time that starts at some arbitrary time $t_0$.  This is important if  the channel state probability distribution $F_S$ changes to a different one at time $t_0$. An algorithm is said to achieve an \emph{$\epsilon$-approximation with adaptation time $C$} if: 
$$ \phi\left(\frac{1}{T}\sum_{t=t_0}^{t_0+T-1} \expect{\mu[t]}\right) \geq \phi^{opt} - \epsilon \quad, \forall t_0 \in \{0, 1, 2, \ldots\}, \forall T \geq C $$
and whenever the channel state distribution function $F_S$ is the same for all time $t \geq t_0$ (the distribution function 
might be different before slot $t_0$).  
This definition captures how long it takes an algorithm to  respond to an unexpected change in channel probabilities that occurs at some time $t_0$.  If the controller knows when such a change occurs, it can simply reset the algorithm by defining the current time as time $0$.  However, the difficulty is that the controller does not necessarily know when a change occurs, and so it cannot reset at appropriate times.  Thus, the adaptation time of an algorithm can be much larger than its convergence time. 

A key aspect of these definitions is that the probability distribution for the system is unknown.  If the distribution were known, one could define a randomized algorithm that transmits with optimized conditional probabilities (given the observed $S[t]$), and convergence of the expectation is immediate.  
An alternative sample-path definition of convergence time is considered in \cite{atilla-convergence-ton}. That work shows
the sample path time average of an integer sequence that converges to an optimal non-integer value must have 
error that decays like $\Omega(1/t)$ (for example, the error might be $1/t$ on odd slots and $-1/t$ on even slots).  This 
holds regardless of whether or not probabilities are known. Of course, if probabilities are known, one can design a randomized algorithm that has optimal expectations on every slot.  
This paper proves that, if probabilities are \emph{unknown}, then even the \emph{expectations} must have an 
$\Omega(1/t)$ utility optimality gap.

\subsection{Prior drift-based algorithm} 

It is known that the  \emph{drift-plus-penalty algorithm} (DPP) of \cite{neely-fairness-ton}\cite{sno-text} achieves an $\epsilon$-approximation with convergence time and adaptation time both being $O(1/\epsilon^2)$.  This algorithm 
operates by defining, for each $i \in \{1, \ldots, n\}$, an auxiliary flow control process $\gamma_i[t]$ and  \emph{virtual queue} $Q_i[t]$ with update equation: 
\begin{equation} \label{eq:q-update} 
 Q_i[t+1] = \max[Q_i[t] + \gamma_i[t] - \mu_i[t], 0] 
 \end{equation} 
The initial condition is typically $Q_i[0]=0$. Every slot $t \in \{0, 1, 2, \ldots\}$, DPP observes $S[t]$ and 
chooses $\mu[t] = (\mu_1[t], \ldots, \mu_n[t])$ and 
 $\gamma[t]=(\gamma_1[t], \ldots, \gamma_n[t])$ via: 
\begin{align}
\mu[t] &= \arg\max_{(r_1[t], \ldots, r_n[t]) \in \Gamma_{S[t]}} \left[\sum_{i=1}^n Q_i[t]r_i[t]\right] \label{eq:mu-DPP}  \\
\gamma[t] &= \arg\max_{(\theta_1[t], \ldots, \theta_n[t]) \in \script{B}} \left[\frac{1}{\epsilon}\phi(\theta_1[t], \ldots, \theta_n[t]) - \sum_{i=1}^n Q_i[t] \theta_i[t]\right] \label{eq:gamma-DPP} 
\end{align}
where $\epsilon>0$ is a parameter that affects a tradeoff between utility optimality and virtual queue size (and hence convergence time). 
This separates the transmission rate decisions $\mu[t]$ according to the (possibly nonconvex)  max-weight rule \eqref{eq:mu-DPP} (which acts only on the queues), and the flow decisions $\gamma[t]$ according to the  (convex) problem \eqref{eq:gamma-DPP} (which uses both the queues and the utility function $\phi$). 
This algorithm is \emph{statistics-unaware}. 
Under a mild \emph{bounded subgradient} condition on the utility function $\phi$, it is shown in \cite{sno-text} that the worst-case virtual queue size is $O(1/\epsilon)$ and the utility achieved over the first $T$ slots satisfies:\footnote{Note that $Q_i[T]/T$ bounds the 
deviation between input flow rate and delivery rate in virtual queue $i$. The worst-case value of $Q_i[T]/T$ is $O(1/\epsilon)/T$, which is $O(\epsilon)$ whenever $T\geq 1/\epsilon^2$. This leads to $1/\epsilon^2$ convergence time \cite{simple-convergence-time-arxiv}.}  
$$ \expect{\phi(\overline{\mu}[T])}  \geq \phi^{opt} - O(\epsilon) \quad \forall T \geq 1/\epsilon^2 $$ 
The utility function is not required to be differentiable and hence this performance holds for non-smooth problems. 
A similar inequality holds for \emph{any interval of time of duration  $1/\epsilon^2$},  
and so the algorithm has an $O(1/\epsilon^2)$ adaptation time.  These results extend to  allow 
additional time average constraints and queue stability constraints \cite{sno-text}. 

\subsection{Prior gradient-based algorithms} 

Alternative \emph{gradient-based algorithms} are developed in  \cite{prop-fair-down}\cite{vijay-allerton02}.  These algorithms
assume the utility function is differentiable.  Let $\phi'(x)^{\transpose}$ denote the transpose of the derivative of $\phi$ at vector $x=(x_1, \ldots, x_n)$, assumed to be a $1 \times n$ row vector: 
$$ \phi'(x)^{\transpose} = \left[\frac{\partial \phi (x)}{\partial x_1} , \ldots,  \frac{\partial \phi (x)}{\partial x_n}\right]$$ 
The algorithms in \cite{prop-fair-down}\cite{vijay-allerton02} use a max-weight type decision with weights determined by the gradient of the utility function evaluated at the time averaged vector.  Specifically, every slot $t>0$ they choose $\mu[t] \in \Gamma_{S[t]}$ as the maximizer of the following expression: 
\begin{equation} \label{eq:full-time-average} 
\phi'(\tilde{\mu}[t-1])^{\transpose} \mu[t] 
\end{equation} 
where $\tilde{\mu}[t-1]$ represents some type of averaging of the previous transmission rates $\mu[0], \ldots, \mu[t-1]$, 
such as the \emph{running average} $\overline{\mu}[t] = \sum_{\tau=0}^{t-1} \mu[\tau]$ (called the RUN algorithm in this paper), 
or an exponentially smoothed average that shall be precisely defined later (called the EXP algorithm in this paper). 
This can be viewed as a stochastic variation on the Frank-Wolfe algorithm for deterministic convex minimization (see, for example,  \cite{Frank-Wolfe}). 
The analyses in \cite{prop-fair-down}\cite{vijay-allerton02} use fluid limit arguments that make precise performance bounds
difficult to obtain.  This gradient-based approach is extended in \cite{stolyar-greedy}\cite{stolyar-gpd-gen} to include additional queue stability constraints. 
To our knowledge, there are no formal analyses of the convergence time of these algorithms.  An analysis in \cite{sno-text} proves the algorithm produces an $\epsilon$-approximation for more general types of problems with queues, with an $O(1/\epsilon)$ queue size, but the proof requires an (unproven) \emph{convergence assumption} and does not specify what the convergence time might be even if the convergence assumption holds. 

\subsection{Related queue stability methods} 

Related problems of minimizing penalty subject to queue stability constraints are considered in \cite{sno-text}\cite{longbo-learning-lagrange}\cite{neely-energy-convergence-ton}\cite{simple-convergence-time-arxiv} using drift-plus-penalty ideas.  The basic $O(1/\epsilon^2)$ convergence results are in \cite{sno-text}\cite{simple-convergence-time-arxiv}.  An important method in \cite{longbo-learning-lagrange} uses 
a \emph{Lagrange multiplier estimation phase} to reduce convergence time to an $O(1/\epsilon^{1 + 2/3})$ bound.\footnote{The work 
\cite{longbo-learning-lagrange} shows the \emph{transient time} for backlog to come close to a Lagrange multiplier vector is $O(1/\epsilon^{2/3})$.  For transients to be amortized, the total time for averages to be within $\epsilon$ of optimality is $O(1/\epsilon^{1+2/3})$.} The work \cite{neely-energy-convergence-ton} treats the special case of average power minimization subject to stability in a simple 1-queue system and shows that convergence time is $O(\log(1/\epsilon)/\epsilon)$.  Recent work in 
\cite{hao-fast-convex-SIAM} uses drift techniques to show that convergence time for  dual-subgradient methods for 
deterministic convex programs can be improved from $O(1/\epsilon^2)$ to $O(1/\epsilon)$. 

\subsection{Our contributions} 

This paper  shows that, assuming the utility function $\phi$ is smooth and has a Lipschitz continuous gradient, 
the convergence time of RUN 
is $O(\log(1/\epsilon)/\epsilon)$, which is superior to that of the DPP algorithm. To our knowledge, this is the first demonstration that such performance is possible.  Further, we show that no statistics-unaware algorithm can achieve a convergence time faster than $O(1/\epsilon)$, and so RUN is within a logarithmic factor of the optimal convergence time.  In the special case when the utility function satisfies an additional \emph{strongly convex} assumption, it is shown that mean square error between the achieved rate vector under RUN and the optimal rate vector decays like $O(\log(t)/t)$, where $t$ is the number of time steps.

Unfortunately, the RUN algorithm uses a vanishing stepsize and 
has no adaptation capabilities. Indeed, it uses a time average starting from time $t=0$ and it  cannot adapt if the probability distribution 
changes halfway through implementation. For example,  if a time average is built over the first $10^3$ slots, and then the probability
distribution changes, it may take $10^6$ slots to amortize the affects of the old and irrelevant time average before the system produces new averages that are close to that desired for the new probability distribution.  
That is, the time required to ``un-average'' an old time average 
can be much longer than the time spent building up this old average. The result is that, if such a change occurs, the 
network utility produced after the change is typically far from optimality.  Formally, it can be shown that the adaptation time, as defined in 
Section \ref{section:convergence-def}, is $\infty$ because the change in probability distribution  can occur at arbitrarily large times $t_0$. 

A simple fix to this adaptability issue is to replace the full time average $\overline{\mu}[t-1]$ used in \eqref{eq:full-time-average}, which averages over the always-growing time interval $\{0, 1, \ldots, t-1\}$,  
with an \emph{exponentially weighted average} (this gives rise to the EXP algorithm).  Fluid model properties of the 
EXP algorithm are considered in \cite{prop-fair-down}\cite{stolyar-greedy}\cite{stolyar-gpd-gen}.  In this paper, we show EXP produces an $O(\epsilon)$ approximation and compute its convergence time.  Unfortunately, while this algorithm has adaptation capabilities similar to the DPP  algorithm, it also has similar $O(1/\epsilon^2)$ convergence time. An open question is whether or not it is possible for both convergence and adaptation times to be improved beyond $O(1/\epsilon^2)$. 

A special case of our stochastic system is a deterministic system where $\mu[t]$ is chosen every slot from a fixed 
set $\Gamma$ that never changes.  When $\Gamma$ is nonconvex, optimal utility typically requires different points
of $\Gamma$ to be selected with different fractions of time. Our results allow computation of fractions of time over which 
the resulting utility is within $\epsilon$ of optimality.  In this context,  a different stepsize
rule is considered that is different from the RUN and EXP algorithms and that relates to classical deterministic convex minimization via Frank-Wolfe.   This stepsize allows fractions of time to be computed with utility error that decays
like $O(1/t)$, faster than the $O(\log(t)/t)$ decay of RUN.

\section{Preliminaries}

\subsection{Assumptions} \label{section:assumptions} 

The set of all transmission rate vectors  available for scheduling is assumed to be bounded. 
Specifically, define the $n$-dimensional box $\script{B} \subseteq \mathbb{R}^n$ by: 
\begin{equation} \label{eq:box} 
\script{B} = [0, \mu_1^{max}] \times \cdots [0, \mu_n^{max}] 
\end{equation} 
where $\mu_i^{max}>0$ are given maximum transmission rates over each 
link $i \in \{1, \ldots, n\}$. For each channel state vector $s \in \script{S}$, the set of available transmission rate vectors 
$\Gamma_s$ is assumed to be a closed
and bounded subset of $\script{B}$.  The network controller chooses $\mu[t] \in \Gamma_{S[t]}$ on each slot $t$, 
and so $0\leq \mu_i[t] \leq \mu_i^{max}$ for all slots $t$ and all  $i \in \{1, \ldots, n\}$.

Let $\phi:\script{B} \rightarrow\mathbb{R}$ be a concave utility function that is entrywise nondecreasing. The function $\phi$ is assumed to be differentiable and  $G$-smooth, so that the gradients $\phi'(x)$ are $G$-Lipschitz continuous: 
\begin{align*}
\norm{\phi'(x) - \phi'(y)} \leq G\norm{x-y} \quad , \forall x, y \in \script{B}
\end{align*}
where $\norm{x} = \sqrt{\sum_{i=1}^n x_i^2}$ denotes the standard Euclidean norm. Formally, the gradients $\phi'(x)$ for points $x$ on the \emph{boundary} of the box $\script{B}$ are defined with respect to limits taken over the interior of the box, and are assumed to 
satisfy the $G$-Lipschitz property above.

An example utility function is
$$ \phi(x) = \sum_{i=1}^n \log(1 + \beta_i x_i) $$ 
where $\beta_i$ are positive values that weight the priority of each user $i \in \{1, \ldots, n\}$. 
Using $\beta_i=\beta$ for all $i$ and choosing a large value of $\beta$ approaches the well known \emph{proportionally fair 
utility} $\sum_{i=1}^n \log(x_i)$.  In this paper, we avoid explicit use of the $\log(x)$ utility because it has a singularity at $x=0$
and is unbounded and has unbounded gradients.

\subsection{Convexity and smoothness} 

It is known that every concave and differentiable function $\phi:\script{B}\rightarrow\mathbb{R}$ 
satisfies the following inequality  \cite{nesterov-book}\cite{bertsekas-convex}: 
\begin{equation} \label{eq:subgradient-inequality} 
\phi(y) \leq \phi(x) + \phi'(x)^{\transpose}(y-x)
\end{equation} 
Further, it is known that every $G$-smooth function $\phi:\script{B}\rightarrow\mathbb{R}$ satisfies the following, often 
called the \emph{descent lemma}  \cite{nesterov-book}\cite{bertsekas-convex}: 
\begin{equation} \label{eq:smooth} 
\phi(y) \geq \phi(x) + \phi'(x)^{\transpose}(y-x) - \frac{G}{2}\norm{y-x}^2 
\end{equation} 

\subsection{The capacity region} 

Let $\Gamma^*$ be the set of all ``one-shot'' expectations $\expect{\mu[0]} \in \mathbb{R}^n$ 
that are possible on slot $0$, considering
all possible conditional probability distributions for choosing $\mu[0] \in \Gamma(S[0])$ in 
reaction to the observed vector $S[0]$.  Since $\mu[0] \in \script{B}$ with probability 1, it follows that the set 
$\Gamma^*$ is in the bounded set $\script{B}$. It can be shown that $\Gamma^*$ is a convex set. 
Define $\overline{\Gamma}^*$ as the closure of $\Gamma^*$.  It can be shown that $\overline{\Gamma}^*$ is 
convex, closed, and bounded.  It is shown in \cite{sno-text} that $\overline{\Gamma}^*$ is the \emph{network capacity region}, in the sense that all possible limiting time average expected transmission rate vectors must lie in the set $\overline{\Gamma}^*$. Further, 
optimality for the problem \eqref{eq:p1}-\eqref{eq:p2} can be defined
by $\overline{\Gamma}^*$. Specifically, define $\phi^{opt}$ as the supremum value of the objective function \eqref{eq:p1} over all possible algorithms. It is known that there exists a vector $x^* \in \overline{\Gamma}^*$ such that $\phi^{opt} = \phi(x^*)$.  In fact, it is shown in \cite{sno-text} that:\footnote{It can similarly be shown that $\phi^{opt}$ is the optimal utility if uniform time averages are replaced by weighted time averages, although that detail is likely not in any publication and shall also be omitted here.} 
\begin{equation} \label{eq:optimality} 
\phi^{opt} = \max_{x \in \overline{\Gamma}^*} \phi(x) 
\end{equation}

\section{Algorithm and analysis} 

This section considers a stochastic version of the deterministic Frank-Wolfe algorithm from \cite{Frank-Wolfe}, also considered
in the fluid limit papers \cite{prop-fair-down}\cite{vijay-allerton02}. 
It is useful to analyze a class of algorithms that use general time-varying weights. Both RUN and EXP have this structure.

\subsection{Weighted averaging algorithms} 

Let $\{\eta_t\}_{t=0}^{\infty}$ be a sequence of real numbers that satisfy $0 < \eta_t \leq 1$ for all $t \in \{0, 1, 2, \ldots\}$. These
shall be used to define a 
sequence of vectors $\gamma[t] \in \mathbb{R}^n$ that are weighted averages of the transmission vectors. 
Specifically, define $\gamma[-1]=0 \in \mathbb{R}^n$, and define: 
\begin{equation} \label{eq:weighted-average} 
\gamma[t] = (1-\eta_t) \gamma[t-1] + \eta_t \mu[t]  \quad, \forall t \in \{0, 1 ,2, \ldots\} 
\end{equation} 
Strictly speaking, the above scheme is an ``approximate'' weighted average of the transmission vectors $\mu[t]$ because it 
initializes $\gamma[-1]$ to 0, rather than to $\mu[0]$.  This ``zero-initialization'' is for convenience later.   Notice that using $\eta_t = \eta$ for all $t$, for a fixed $\eta \in (0,1)$, results in an exponentially weighted average of $\mu[t]$.  Using $\eta_t = 1/(t+1)$ results in a running average of $\mu[t]$. The values $\eta_t$ are often called the \emph{stepsize} on slot $t$.

On each slot $t \in \{0, 1, 2, \ldots\}$, we consider a gradient-based opportunistic scheduling 
algorithm that observes  $\gamma[t-1]$ and the current channel state $S[t]$ and 
chooses the transmission vector $\mu[t]$ to solve: 
\begin{align} 
\mbox{Maximize:} \quad & \phi'(\gamma[t-1])^{\transpose}\mu[t]  \label{eq:max1} \\
\mbox{Subject to:} \quad & \mu[t] \in \Gamma_{S[t]} \label{eq:max2}
\end{align}
The above decision chooses $\mu[t]$ to maximize a linear function over the compact set $\Gamma_{S[t]}$, and so there is at least one maximizer.  If more than one maximizer exists, ties are broken arbitrarily.  Formally, the rule for breaking ties is assumed to be probabilistically measurable, so that $\gamma[t]$ is a valid random variable with well defined expectations that lie in the box $\script{B}$.  

A key property of the above algorithm is the following: If $\mu[t] \in \Gamma_{S[t]} $ is the decision produced 
by the  rule \eqref{eq:max1}-\eqref{eq:max2} on a slot $t$,  then: 
\begin{equation} \label{eq:key} 
\phi'(\gamma[t-1])^{\transpose}\mu[t] \geq \phi'(\gamma[t-1])\mu^*[t] \quad \forall t \in \{0, 1, 2,  \ldots\} 
\end{equation} 
where $\mu^*[t]$ is any other (possibly randomized) decision vector in the set $\Gamma_{S[t]}$.  This holds
simply because $\mu[t]$ is (by definition) the maximizer of \eqref{eq:max1}.  
Two other useful properties that hold for all slots $t \in \{0, 1,2, \ldots\}$ are: 
\begin{align}
\mu[t]-\gamma[t-1] &= \frac{\gamma[t]-\gamma[t-1]}{\eta_t} \label{eq:another1} \\
\phi'(\gamma[t-1])^{\transpose}(\gamma[t]-\gamma[t-1]) &\leq \phi(\gamma[t]) - \phi(\gamma[t-1]) + \frac{G}{2}\norm{\gamma[t]-\gamma[t-1]}^2 \label{eq:another2} 
\end{align}
where \eqref{eq:another1} follows by \eqref{eq:weighted-average};  \eqref{eq:another2} follows by the smoothness property \eqref{eq:smooth}.  

\subsection{Performance lemmas} 

\begin{lem} \label{lem:compare}  For each slot $t \in \{0, 1, 2,  \ldots\}$ the weighted averaging algorithm ensures: 
$$ \expect{\phi'(\gamma[t-1])^{\transpose}(\mu[t]-\gamma[t-1])} \geq \phi^{opt} -  \expect{\phi(\gamma[t-1])} $$
where $\phi^{opt}$ is the optimal objective value for problem \eqref{eq:p1}-\eqref{eq:p2}. 
\end{lem}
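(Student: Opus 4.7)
The plan is to exploit two ingredients: the one-step optimality of $\mu[t]$ under the linear objective $\phi'(\gamma[t-1])^\transpose \mu$, and the concavity inequality \eqref{eq:subgradient-inequality} evaluated at an optimal-throughput point $x^* \in \overline{\Gamma}^*$ satisfying $\phi(x^*) = \phi^{opt}$.

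First I would invoke \eqref{eq:optimality} to fix $x^* \in \overline{\Gamma}^*$ with $\phi(x^*)=\phi^{opt}$. Because $\Gamma^*$ is dense in $\overline{\Gamma}^*$ and $\phi$ is continuous on the compact box $\script{B}$, I can pick a sequence $x_k \in \Gamma^*$ with $x_k \to x^*$ and $\phi(x_k)\to\phi^{opt}$. By definition of $\Gamma^*$, each $x_k$ is realized by some stationary randomized policy: there is a measurable rule that, given only the i.i.d.\ channel state $S[t]$, outputs a vector $\mu^*_k[t]\in\Gamma_{S[t]}$ with $\expect{\mu^*_k[t]}=x_k$. Since this rule does not look at history, $\mu^*_k[t]$ is independent of the $\sigma$-field generated by $\gamma[t-1]$.

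Second, I would chain the following inequalities inside a conditional expectation given $\gamma[t-1]$. The key optimality relation \eqref{eq:key} gives
\begin{equation*}
\phi'(\gamma[t-1])^\transpose \mu[t] \;\geq\; \phi'(\gamma[t-1])^\transpose \mu^*_k[t].
\end{equation*}
Taking $\expect{\,\cdot\mid\gamma[t-1]}$ and using independence of $\mu^*_k[t]$ from $\gamma[t-1]$,
\begin{equation*}
\expect{\phi'(\gamma[t-1])^\transpose \mu[t]\mid \gamma[t-1]} \;\geq\; \phi'(\gamma[t-1])^\transpose x_k.
\end{equation*}
Subtracting $\phi'(\gamma[t-1])^\transpose \gamma[t-1]$ from both sides and taking outer expectations,
\begin{equation*}
\expect{\phi'(\gamma[t-1])^\transpose(\mu[t]-\gamma[t-1])} \;\geq\; \expect{\phi'(\gamma[t-1])^\transpose(x_k-\gamma[t-1])}.
\end{equation*}
Now apply the concavity inequality \eqref{eq:subgradient-inequality} with $y=x_k$ and $x=\gamma[t-1]$ (done sample-pathwise, then averaged):
\begin{equation*}
\phi'(\gamma[t-1])^\transpose(x_k-\gamma[t-1]) \;\geq\; \phi(x_k)-\phi(\gamma[t-1]).
\end{equation*}
Combining yields
\begin{equation*}
\expect{\phi'(\gamma[t-1])^\transpose(\mu[t]-\gamma[t-1])} \;\geq\; \phi(x_k)-\expect{\phi(\gamma[t-1])}.
\end{equation*}

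Finally I would let $k\to\infty$; since $\phi(x_k)\to\phi^{opt}$ the left side is independent of $k$ and the bound tightens to the claimed inequality.

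The main obstacle is the measurability/closure point: one has to justify using a comparator $x^*$ that may only lie in the closure $\overline{\Gamma}^*$ rather than in $\Gamma^*$ itself, and one has to be careful that the comparator policy $\mu^*_k[t]$ is \emph{stationary and randomized based only on $S[t]$}, so that independence from $\gamma[t-1]$ (which is built from $S[0],\ldots,S[t-1]$ and past tie-breaking) is clean. Everything else is a straightforward chaining of the max-weight optimality \eqref{eq:key} with the first-order concavity bound \eqref{eq:subgradient-inequality}; no use of $G$-smoothness or of the stepsizes $\eta_t$ is required for this lemma.
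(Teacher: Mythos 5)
Your proof is correct and follows essentially the same route as the paper's: both chain the max-weight optimality \eqref{eq:key} against a stationary randomized comparator policy (independent of $\gamma[t-1]$ because it reacts only to the i.i.d.\ state $S[t]$), invoke the concavity inequality \eqref{eq:subgradient-inequality}, and handle the closure issue by approximating $x^* \in \overline{\Gamma}^*$ with points of $\Gamma^*$. The only cosmetic differences are that you apply concavity at each $x_k$ and pass to the limit afterward (the paper takes the limit in the linear bound first, then applies concavity at $x^*$), and that you phrase the independence step via conditional expectation rather than factoring the unconditional expectation.
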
 
\begin{proof} 
Fix $t\in \{0, 1, 2,  \ldots\}$ and let $\mu[t]$ be the decision made by the weighted averaging algorithm on slot $t$. 
Recall that $\Gamma^*$ is the set of all achievable one-shot expectations $\expect{\mu[0]}$. 
Fix $x \in \Gamma^*$ and let $\mu^*[t] \in \Gamma_{S[t]}$ be a stationary and randomized algorithm that makes decisions as a randomized
function of $S[t]$ to yield $\expect{\mu^*[t]} = x$. Applying inequality \eqref{eq:key} gives:  
$$ \phi'(\gamma[t-1])^{\transpose}\mu[t] \geq \phi'(\gamma[t-1])^{\transpose}\mu^*[t] $$
Taking expectations of this gives
\begin{align}
\expect{ \phi'(\gamma[t-1])^{\transpose}\mu[t]} &\geq \expect{\phi'(\gamma[t-1])^{\transpose}\mu^*[t]} \nonumber \\ 
&\overset{(a)}{=} \expect{\phi'(\gamma[t-1])^{\transpose}} \expect{\mu^*[t]} \nonumber \\ 
&=\expect{\phi'(\gamma[t-1])^{\transpose}} x \label{eq:x-val} 
\end{align}
where equality (a) holds because channel state vectors $S[t]$ are i.i.d. over slots and $\mu^*[t]$ depends only on $S[t]$, so that it is independent of $\gamma[t-1]$.  Inequality \eqref{eq:x-val} holds for all vectors $x \in \Gamma^*$.  Taking a limit as $x \rightarrow x^*$, where $x^*$ is a fixed vector in $\overline{\Gamma}^*$ such that $\phi(x^*)=\phi^{opt}$, gives: 
$$\expect{ \phi'(\gamma[t-1])^{\transpose}\mu[t]} \geq \expect{\phi'(\gamma[t-1])^{\transpose}}x^* $$
Subtracting the same value from both sides of the above inequality gives: 
\begin{equation} \label{eq:manip} 
\expect{\phi'(\gamma[t-1])^{\transpose}(\mu[t]-\gamma[t-1])} \geq \expect{\phi'(\gamma[t-1])^{\transpose}(x^*-\gamma[t-1])} 
\end{equation} 
However, the subgradient inequality \eqref{eq:subgradient-inequality} for concave functions yields: 
$$ \phi'(\gamma[t-1])^{\transpose}(x^*-\gamma[t-1]) \geq \phi(x^*) - \phi(\gamma[t-1]) $$
Taking expectations of the above inequality and substituting into the right-hand-side of \eqref{eq:manip} yields the result. 
\end{proof} 

Define $\mu^{max} = (\mu_1^{max}, \ldots, \mu_n^{max})$.  We have the following lemma. 

\begin{lem} \label{lem:performance}  For all slots $t \in \{0, 1, 2, \ldots\}$ we have: 
\begin{equation} \label{eq:performance} 
\frac{1}{\eta_t} \expect{\phi(\gamma[t])}  \geq \phi^{opt} + \left[\frac{1}{\eta_t}- 1\right]\expect{\phi(\gamma[t-1])} - \frac{\eta_t G\norm{\mu^{max}}^2}{2}  \quad , \forall t \in \{0, 1, 2, \ldots\} 
\end{equation} 
\end{lem}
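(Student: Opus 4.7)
The plan is to chain together Lemma \ref{lem:compare} with the two identities \eqref{eq:another1} and \eqref{eq:another2}, and then bound the resulting squared-norm term using the box constraint on $\script{B}$. Concretely, I would start from the lower bound
$$\expect{\phi'(\gamma[t-1])^{\transpose}(\mu[t]-\gamma[t-1])} \geq \phi^{opt} - \expect{\phi(\gamma[t-1])}$$
provided by Lemma \ref{lem:compare}, and try to turn the left-hand side into something expressible in terms of $\expect{\phi(\gamma[t])}$ and $\expect{\phi(\gamma[t-1])}$.

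To do this, I would first apply \eqref{eq:another1} to rewrite $\mu[t]-\gamma[t-1] = (\gamma[t]-\gamma[t-1])/\eta_t$, pulling the factor $1/\eta_t$ outside the inner product. Then I would apply \eqref{eq:another2} (the descent lemma consequence) to upper bound $\phi'(\gamma[t-1])^{\transpose}(\gamma[t]-\gamma[t-1])$ by $\phi(\gamma[t]) - \phi(\gamma[t-1]) + \tfrac{G}{2}\norm{\gamma[t]-\gamma[t-1]}^2$. Taking expectations and combining with Lemma \ref{lem:compare} yields
$$\phi^{opt} - \expect{\phi(\gamma[t-1])} \leq \frac{1}{\eta_t}\left(\expect{\phi(\gamma[t])} - \expect{\phi(\gamma[t-1])}\right) + \frac{G}{2\eta_t}\expect{\norm{\gamma[t]-\gamma[t-1]}^2},$$
and a direct rearrangement isolates $(1/\eta_t)\expect{\phi(\gamma[t])}$ on the left with the desired coefficient $[1/\eta_t - 1]$ multiplying $\expect{\phi(\gamma[t-1])}$ on the right.

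The remaining step is to control the squared-norm term, which is where the $\eta_t G \norm{\mu^{max}}^2 / 2$ penalty actually comes from. From \eqref{eq:weighted-average} I have $\gamma[t]-\gamma[t-1] = \eta_t(\mu[t] - \gamma[t-1])$, so $\norm{\gamma[t]-\gamma[t-1]}^2 = \eta_t^2 \norm{\mu[t]-\gamma[t-1]}^2$. A short induction using \eqref{eq:weighted-average}, starting from $\gamma[-1]=0$ and using $\eta_t \in (0,1]$, shows that $\gamma[t-1] \in \script{B}$; since $\mu[t] \in \script{B}$ as well, each coordinate satisfies $|\mu_i[t]-\gamma_i[t-1]| \leq \mu_i^{max}$, giving $\norm{\mu[t]-\gamma[t-1]}^2 \leq \norm{\mu^{max}}^2$. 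Substituting this bound makes $\tfrac{G}{2\eta_t}\expect{\norm{\gamma[t]-\gamma[t-1]}^2} \leq \tfrac{\eta_t G \norm{\mu^{max}}^2}{2}$, which is exactly the penalty appearing in \eqref{eq:performance}.

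I do not expect any serious obstacle here — the argument is essentially a Frank-Wolfe-style one-step descent bookkeeping. The only delicate point is making sure the direction of the inequalities lines up: Lemma \ref{lem:compare} gives a lower bound on the inner product, while \eqref{eq:another2} gives an upper bound, and these two combine in the right direction precisely because the $(G/2)\norm{\cdot}^2$ term appears on the larger side of \eqref{eq:another2}, ultimately entering \eqref{eq:performance} with a negative sign as a penalty on the stepsize $\eta_t$.
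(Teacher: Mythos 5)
Your proposal is correct and follows essentially the same route as the paper's proof: Lemma \ref{lem:compare}, then \eqref{eq:another1} to pull out $1/\eta_t$, then \eqref{eq:another2}, then \eqref{eq:another1} again to convert the squared-norm term to $\eta_t^2\norm{\mu[t]-\gamma[t-1]}^2$, bounded by $\eta_t^2\norm{\mu^{max}}^2$ via the box constraint, and a final rearrangement. Your explicit induction showing $\gamma[t-1]\in\script{B}$ is a small detail the paper leaves implicit, but it does not change the argument.
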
 

\begin{proof} 
By Lemma \ref{lem:compare} we have for all slots $t\in \{0, 1, 2,  \ldots\}$: 
\begin{align}
\expect{\phi(\gamma[t-1])} &\geq \phi^{opt} - \expect{\phi'(\gamma[t-1])^{\transpose}(\mu[t]-\gamma[t-1])}  \nonumber \\
&\overset{(a)}{=}  \phi^{opt} - \frac{1}{\eta_t}\expect{\phi'(\gamma[t-1])^{\transpose}(\gamma[t]-\gamma[t-1])} \nonumber \\
&\overset{(b)}{\geq} \phi^{opt}  - \frac{1}{\eta_t} \expect{\phi(\gamma[t])-\phi(\gamma[t-1]) + \frac{G}{2}\norm{\gamma[t]-\gamma[t-1]}^2 } \nonumber \\
&\overset{(c)}{=} \phi^{opt} + \frac{\expect{\phi(\gamma[t-1])}}{\eta_t}-\frac{\expect{\phi(\gamma[t])}}{\eta_t} - \frac{\eta_t G}{2} \expect{\norm{\mu[t]-\gamma[t-1]}^2} \nonumber \\
&\overset{(d)}{\geq} \phi^{opt} + \frac{\expect{\phi(\gamma[t-1])}}{\eta_t}-\frac{\expect{\phi(\gamma[t])}}{\eta_t}  - \frac{\eta_t G\norm{\mu^{max}}^2}{2}   \label{eq:almost-done} 
\end{align}
where (a) holds by \eqref{eq:another1}; (b) holds by \eqref{eq:another2}; (c) holds by \eqref{eq:another1}; and (d) holds 
because $\mu[t]$ and $\gamma[t-1]$ lie in the box $\script{B}$ and the largest possible magnitude of their difference is $\norm{\mu^{max}}$. 
Rearranging terms yields the result. 
\end{proof} 

The above lemma shall be used to evaluate the EXP and RUN algorithms.

\subsection{The RUN algorithm} 

Let $\eta_t = \frac{1}{t+1}$ for $t \in \{0, 1, 2, \ldots\}$.  With these weights, the iteration \eqref{eq:weighted-average} produces a \emph{running average} of the $\mu[t]$ values: 
$$ \gamma[t] = \frac{t}{t+1} \gamma[t-1] + \frac{1}{t+1}\mu[t]  \implies \gamma[t] = \frac{1}{t+1} \sum_{\tau=0}^{t} \mu[\tau] = \overline{\mu}[t+1] \quad, \forall t \in \{0, 1, 2, \ldots \} $$
This shall be called the RUN algorithm. 

\begin{thm} \label{thm:RUN} Under the RUN algorithm, we have for all integers $T>0$:\footnote{By Jensen's inequality for the concave function $\phi$  we
know $\phi(\expect{\overline{\mu}[T]})\geq \expect{\phi(\overline{\mu}[T])}$, and so Theorems \ref{thm:RUN} and \ref{thm:EXP} also provide bounds on $\phi(\expect{\overline{\mu}[T]})$.} 
$$\expect{\phi\left(\overline{\mu}[T] \right)} \geq \phi^{opt} - \frac{G \norm{\mu^{max}}^2(1+\log(T))}{2T} $$
\end{thm}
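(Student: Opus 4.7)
The plan is to substitute $\eta_t = 1/(t+1)$ into Lemma \ref{lem:performance} and exploit the fact that the resulting recursion telescopes.

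With this choice, $1/\eta_t = t+1$ and $1/\eta_t - 1 = t$, so Lemma \ref{lem:performance} specializes to
\begin{equation*}
(t+1)\expect{\phi(\gamma[t])} \geq \phi^{opt} + t\,\expect{\phi(\gamma[t-1])} - \frac{G\norm{\mu^{max}}^2}{2(t+1)}.
\end{equation*}
This is the key structural observation: the coefficient of $\expect{\phi(\gamma[t-1])}$ on the right is exactly one less than the coefficient of $\expect{\phi(\gamma[t])}$ on the left, so summing over $t \in \{0,1,\ldots,T-1\}$ produces a telescoping sum.

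Next, I would sum the inequality over $t=0,\ldots,T-1$. The difference $\sum_{t=0}^{T-1}(t+1)\expect{\phi(\gamma[t])} - \sum_{t=0}^{T-1} t\,\expect{\phi(\gamma[t-1])}$ collapses after an index shift to $T\expect{\phi(\gamma[T-1])}$, where I use the fact that the $t=0$ term of the second sum vanishes (so no assumption about $\phi(\gamma[-1])$ is needed). The right-hand side contributes $T\phi^{opt}$ plus the accumulated error $-\tfrac{G\norm{\mu^{max}}^2}{2}\sum_{t=0}^{T-1}\tfrac{1}{t+1}$.

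Finally, I would bound the harmonic sum by $\sum_{t=1}^{T} 1/t \leq 1 + \log T$, divide through by $T$, and identify $\gamma[T-1] = \overline{\mu}[T]$ from the running-average interpretation of the weights. Putting these together yields
\begin{equation*}
\expect{\phi(\overline{\mu}[T])} \geq \phi^{opt} - \frac{G\norm{\mu^{max}}^2(1+\log T)}{2T},
\end{equation*}
which is the claimed bound.

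There is no real obstacle here once Lemma \ref{lem:performance} is in hand; the whole argument hinges on recognizing that $\eta_t = 1/(t+1)$ is precisely the stepsize that makes the pre-factors $1/\eta_t$ and $1/\eta_t - 1$ consecutive integers, which is what enables telescoping. The only mild care is in the index shift and in noting that the ``zero-initialization'' $\gamma[-1]=0$ is irrelevant because its coefficient after the shift is zero, so no boundary term leaks in.
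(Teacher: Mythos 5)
Your proof is correct and follows essentially the same route as the paper: both sum Lemma \ref{lem:performance} with $\eta_t = 1/(t+1)$ over $t \in \{0,\ldots,T-1\}$, exploit the telescoping of the coefficients $t+1$ and $t$, bound the harmonic sum by $1+\log T$, and identify $\gamma[T-1]=\overline{\mu}[T]$. The only cosmetic difference is that you substitute the stepsize before summing (making the telescoping explicit and the vanishing of the $\gamma[-1]$ boundary term immediate via its zero coefficient), whereas the paper sums with general $\eta_t$ first and then cancels the $\phi(\gamma[-1])$ terms appearing on both sides.
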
 

\begin{proof}  
Fix $T>0$ as an integer.  Summing inequality \eqref{eq:performance} over $t \in \{0, 1, \ldots, T-1\}$ gives: 
$$\sum_{t=0}^{T-1} \frac{1}{\eta_t} \expect{\phi(\gamma[t])} \geq T\phi^{opt} + \sum_{t=0}^{T-1}\left[\frac{1}{\eta_t}-1\right] \expect{\phi(\gamma[t-1])} - \frac{G\norm{\mu^{max}}^2}{2} \sum_{t=0}^{T-1}\eta_t $$
Rearranging terms gives
\begin{align*}
\expect{\sum_{t=0}^{T-1}\phi(\gamma[t-1])} 
&\geq T\phi^{opt} + \sum_{t=0}^{T-2} \expect{\phi(\gamma[t])} \left[\frac{-1}{\eta_t}  + \frac{1}{\eta_{t+1}}\right]  \nonumber \\
& + \left[\frac{\expect{\phi(\gamma[-1])}}{\eta_0} - \frac{\expect{\phi(\gamma[T-1])}}{\eta_{T-1}}  \right] 
  - \frac{G \norm{\mu^{max}}^2}{2} \sum_{t=0}^{T-1} \eta_t 
\end{align*}
Substituting $\eta_t= 1/(t+1)$ gives  
\begin{align*}
\expect{\sum_{t=0}^{T-1}\phi(\gamma[t-1])} 
&\geq T\phi^{opt} + \sum_{t=0}^{T-2} \expect{\phi(\gamma[t])} + \expect{\phi(\gamma[-1])} - T\expect{\phi(\gamma[T-1]) } 
  - \frac{G \norm{\mu^{max}}^2}{2} \sum_{t=0}^{T-1} \frac{1}{t+1}  
\end{align*}
Canceling common terms in the above inequality and rearranging yields
\begin{align*}
T\expect{\phi(\gamma[T-1])} &\geq T \phi^{opt}  -  \frac{G \norm{\mu^{max}}^2}{2} \sum_{t=0}^{T-1} \frac{1}{t+1} \\
&\geq T \phi^{opt} - \frac{G\norm{\mu^{max}}^2}{2} (1+\log(T))
\end{align*}
Dividing by $T$ and using the fact that $\gamma[T-1] = \overline{\mu}[T]$ gives the result. 
\end{proof}

This theorem shows that utility converges to the optimal value $\phi^{opt}$ as $T\rightarrow\infty$. Deviation from optimality 
decays like  $\log(T)/T$.  
Fix $\epsilon>0$.  Then we are within $O(\epsilon)$ of optimality after a \emph{convergence time} of $O(\log(1/\epsilon)/\epsilon)$. 

\subsection{The EXP algorithm} 

Fix $\eta \in (0,1)$ and define $\eta_t = \eta$ for all $t \in \{0, 1, 2, \ldots\}$. This shall be called the EXP algorithm. 

\begin{thm} \label{thm:EXP} Under the EXP algorithm, we have for all integers $T>0$: 
$$\expect{\phi\left(\overline{\mu}[T] \right)} \geq \phi^{opt} - \left[\frac{\phi^{opt} - \phi(0)}{\eta T}\right] - \frac{\eta G \norm{\mu^{max}}^2}{2} $$
\end{thm}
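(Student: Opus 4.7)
The plan is to specialize Lemma~\ref{lem:performance} to the constant stepsize $\eta_t = \eta$ and turn the resulting one-step inequality into a linear recursion for the optimality gap. Multiplying \eqref{eq:performance} through by $\eta$ with $\eta_t = \eta$ immediately produces
$$\expect{\phi(\gamma[t])} \geq (1-\eta)\,\expect{\phi(\gamma[t-1])} + \eta\,\phi^{opt} - \frac{\eta^2 G\,\norm{\mu^{max}}^2}{2}.$$
Defining the expected optimality gap $d_t := \phi^{opt} - \expect{\phi(\gamma[t])}$, this rearranges into a contraction
$$d_t \leq (1-\eta)\,d_{t-1} + \frac{\eta^2 G\,\norm{\mu^{max}}^2}{2},$$
with a clean base case $d_{-1} = \phi^{opt} - \phi(0)$ coming from the convention $\gamma[-1] = 0$.

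Next I would unroll this recursion from $t = -1$ up to $t = T-1$. The added error terms form a geometric series, yielding
$$d_{T-1} \leq (1-\eta)^T \bigl(\phi^{opt} - \phi(0)\bigr) + \frac{\eta^2 G\,\norm{\mu^{max}}^2}{2}\sum_{k=0}^{T-1}(1-\eta)^k \leq (1-\eta)^T \bigl(\phi^{opt} - \phi(0)\bigr) + \frac{\eta G\,\norm{\mu^{max}}^2}{2}.$$
To match the form of the theorem I would then bound the transient by $(1-\eta)^T \leq e^{-\eta T} \leq \frac{1}{\eta T}$, using the elementary inequalities $1-\eta \leq e^{-\eta}$ and $e^{\eta T} \geq 1 + \eta T \geq \eta T$. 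Rearranging gives
$$\expect{\phi(\gamma[T-1])} \geq \phi^{opt} - \frac{\phi^{opt} - \phi(0)}{\eta T} - \frac{\eta G\,\norm{\mu^{max}}^2}{2},$$
which is the advertised inequality, with $\gamma[T-1]$ identified with the iterate $\overline{\mu}[T]$ in the theorem statement (the exponentially weighted average produced by EXP under the convention in \eqref{eq:weighted-average}).

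I do not expect a real obstacle beyond carefully rewriting Lemma~\ref{lem:performance} as a contraction and tracking constants. Unlike the RUN proof, which needed a delicate telescoping across time-varying weights to recover the $\log(T)/T$ rate, the constant stepsize here collapses everything to a standard geometric-series estimate. The price is the non-vanishing floor $\eta G\,\norm{\mu^{max}}^2/2$, which is exactly what one expects from fixed-stepsize stochastic Frank-Wolfe and is the source of the $O(1/\epsilon^2)$ convergence time that the introduction attributes to EXP (obtained by choosing $\eta = \Theta(\epsilon)$ and then $T = \Theta(1/\epsilon^2)$).
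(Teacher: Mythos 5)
There is a genuine gap, and it sits exactly at your last sentence. Under EXP the iterate $\gamma[T-1]$ is \emph{not} $\overline{\mu}[T]$: with constant stepsize, the recursion \eqref{eq:weighted-average} makes $\gamma[T-1] = \sum_{k=0}^{T-1}\eta(1-\eta)^k \mu[T-1-k]$, an exponentially weighted average that discounts the past, whereas the theorem is a statement about the uniform time average $\overline{\mu}[T] = \frac{1}{T}\sum_{t=0}^{T-1}\mu[t]$ of the rates actually transmitted. The identification $\gamma[T-1]=\overline{\mu}[T]$ is valid only for RUN's stepsize $\eta_t = 1/(t+1)$. Everything before that point in your argument is correct: the contraction $d_t \leq (1-\eta)d_{t-1} + \frac{\eta^2 G\norm{\mu^{max}}^2}{2}$ with $d_{-1}=\phi^{opt}-\phi(0)$, the geometric-series unrolling, and the bound $(1-\eta)^T \leq 1/(\eta T)$ (which uses $\phi^{opt}\geq\phi(0)$, true by monotonicity of $\phi$). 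But what you have proven is a bound on $\expect{\phi(\gamma[T-1])}$, i.e.\ on the utility of the \emph{exponentially smoothed} rate vector, not on $\expect{\phi(\overline{\mu}[T])}$. This distinction is not cosmetic in this paper: Theorem~\ref{thm:new-stepsize} is careful to state its $O(1/t)$ guarantee only for the auxiliary average $\gamma[t]$ precisely because it does not transfer to $\overline{\mu}[T]$, and the introduction's convergence/adaptation definitions are phrased in terms of true time averages.

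Your approach can be repaired, and the repair makes clear what is missing. Your contraction actually gives $d_t \leq (1-\eta)^{t+1}(\phi^{opt}-\phi(0)) + \frac{\eta G\norm{\mu^{max}}^2}{2}$ for every $t$, not just $t=T-1$; averaging these over $t\in\{-1,\ldots,T-2\}$ and using $\sum_{k=0}^{T-1}(1-\eta)^k \leq 1/\eta$ gives
$$\frac{1}{T}\sum_{t=0}^{T-1}\expect{\phi(\gamma[t-1])} \geq \phi^{opt} - \frac{\phi^{opt}-\phi(0)}{\eta T} - \frac{\eta G\norm{\mu^{max}}^2}{2}.$$
Then you need the bridge from the $\gamma$ iterates to the transmitted rates: summing \eqref{eq:another1} yields $\overline{\mu}[T] = \frac{1}{T}\sum_{t=0}^{T-1}\gamma[t-1] + \frac{\gamma[T-1]-\gamma[-1]}{\eta T} \geq \frac{1}{T}\sum_{t=0}^{T-1}\gamma[t-1]$ entrywise (since $\gamma[-1]=0$), so monotonicity and concavity of $\phi$ (Jensen) give $\expect{\phi(\overline{\mu}[T])} \geq \frac{1}{T}\sum_{t=0}^{T-1}\expect{\phi(\gamma[t-1])}$, completing the proof. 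The paper reaches the same averaged bound by a telescoping sum of \eqref{eq:useful-EXP} (which requires Lemma~\ref{lem:dominate} to control the boundary term $\expect{\phi(\gamma[T-1])}/\eta$), and then performs exactly this bridge step; your geometric unrolling is a fine substitute for the telescoping and avoids Lemma~\ref{lem:dominate}, but the bridge step is not optional, and it is the one ingredient your proposal omits.
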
 

\begin{proof} 
Substituting $\eta_t = \eta$ into \eqref{eq:performance} gives for all $t \in \{0, 1, 2, \ldots\}$, 
$$  \frac{1}{\eta} \expect{\phi(\gamma[t])} \geq \phi^{opt} +\left[ \frac{1}{\eta} - 1\right] \expect{\phi(\gamma[t-1])} - \frac{\eta G\norm{\mu^{max}}^2}{2}   $$
Rearranging terms gives: 
\begin{equation} \label{eq:useful-EXP}
\expect{\phi(\gamma[t-1])} \geq \phi^{opt} + \frac{1}{\eta}\expect{\phi(\gamma[t-1]) - \phi(\gamma[t])}  - \frac{\eta G \norm{\mu^{max}}^2}{2} 
\end{equation} 
Fix $T>0$ as an integer.  Summing over $t \in \{0, 1, \ldots, T-1\}$ gives
\begin{align*}
 \expect{\sum_{t=0}^{T-1} \phi(\gamma[t-1])} &\geq T \phi^{opt} + \frac{\expect{\phi(\gamma[-1])}}{\eta} -\frac{\expect{\phi(\gamma[T-1])}}{\eta} - \frac{G \eta T\norm{\mu^{max}}^2}{2} \\
 &\geq  T \phi^{opt} + \frac{\phi(0)}{\eta} -\frac{\phi^{opt}}{\eta} - \frac{G \eta T\norm{\mu^{max}}^2}{2} 
 \end{align*}
 where the last inequality holds because $\gamma[-1]=0$ with probability 1, and $\expect{\phi(\gamma[T-1])} \leq \phi^{opt}$ (see Lemma \ref{lem:dominate} in the appendix).  Dividing the above inequality by $T$ and using Jensen's inequality on the concave function 
 $\phi$ gives: 
 $$ \expect{\phi\left(\frac{1}{T}\sum_{t=0}^{T-1} \gamma[t-1]\right)} \geq \phi^{opt} - \left[\frac{\phi^{opt} - \phi(0)}{\eta T}\right] - \frac{G \eta \norm{\mu^{max}}^2}{2} $$
It remains to relate the time average of the $\gamma[t-1]$ process to that of the $\mu[t]$ process.  Substituting $\eta_t = \eta$ into  \eqref{eq:another1} and summing over $t \in \{0,  \ldots, T-1\}$ (and dividing by $T$) gives: 
\begin{align*}
\frac{1}{T}\sum_{t=0}^{T-1} \mu[t] =  \frac{1}{T}\sum_{t=0}^{T-1} \gamma[t-1]  + \frac{\gamma[T-1]-\gamma[-1]}{\eta T}  \geq \frac{1}{T}\sum_{t=0}^{T-1} \gamma[t-1]  
\end{align*}
where the final inequality uses the fact that $\gamma[-1] = 0 \leq \gamma[T-1]$. 
\end{proof} 

Fix $\epsilon>0$.  By defining $\eta = \epsilon$, Theorem \ref{thm:EXP} implies that EXP achieves an $O(\epsilon)$-approximation with convergence time $T = 1/\epsilon^2$.   A similar argument can be given that sums \eqref{eq:useful-EXP} over the interval $\{t_0, \ldots, t_0+T-1\}$ to show that the \emph{adaptation time} of EXP is also $1/\epsilon^2$ (this argument is omitted for brevity).  This 
argument works because the stepsize $\eta$ does not change with time, which is not the case for the RUN algorithm.

\subsection{Relation to deterministic Frank-Wolfe} \label{section:unusual}

The analysis of RUN and EXP in the above subsections is similar to the deterministic analysis of the Frank-Wolfe algorithm (see, for example, \cite{Frank-Wolfe}).  An important difference is that the above analysis treats the \emph{stochastic case} and considers performance in terms of the time average $\overline{\mu}[T]$ achieved over time.  In contrast, the classical Frank-Wolfe algorithm seeks a single vector $x$ within a given convex set that is close to optimal, with no regard to how time averages behave.  

It is interesting to note that a modified stepsize $\eta_t = 2/(t+2)$ is used for deterministic convex minimization in  \cite{Frank-Wolfe} to show that an approximate vector $x$ can be computed after $T$ iterations with error bounded by $O(1/T)$ (which is faster than the $O(\log(T)/T)$ results of RUN).  At first glance, this suggests that using the modified stepsize $\eta_t = 2/(t+2)$ in the stochastic problem 
might remove the $\log(\cdot)$ factor.  However, the same
analysis of the deterministic problem cannot be used in our stochastic context.  Intuitively, this is because the stochastic problem seeks  desirable time average behavior as the stochastic algorithm runs, while deterministic Frank-Wolfe desires computation of a single deterministic vector with no regards to time average behavior.  It is not clear if the $\log(\cdot)$ factor can be removed for the stochastic time average problem. 

However, the stepsize rule $\eta_t = 2/(t+2)$ is still useful for stochastic scheduling problems. 
It leads to an algorithm that is different from RUN and EXP.  The resulting $\gamma[t]$ value is an unusual weighted average of $\{\mu[0], \ldots, \mu[t]\}$.  Indeed, using $\eta_t=2/(t+2)$ in \eqref{eq:weighted-average} gives
\begin{align}
\gamma[0] &= \mu[0] \label{eq:unusual1} \\
\gamma[1] &= \frac{1}{3}\mu[0] + \frac{2}{3}\mu[1] \label{eq:unusual2} \\
\gamma[2] &= \frac{1}{6}\mu[0] + \frac{1}{3}\mu[1] + \frac{1}{2}\mu[2] \label{eq:unusual3} \\
\gamma[3] &= \frac{1}{10}\mu[0] + \frac{1}{5}\mu[1] + \frac{3}{10}\mu[2] + \frac{2}{5}\mu[3] \label{eq:unusual4} 
\end{align}
and so on. 
The next theorem shows that the utility associated with this 
unusual weighted average $\gamma[T]$ deviates from $\phi^{opt}$ by $O(1/T)$, 
although this does not hold for the utility associated with the online time average
 transmission rate $\overline{\mu}[T]$.    This unusual weighted average is particularly 
 useful in the offline deterministic contexts described in Section \ref{section:deterministic}. 
 The proof is similar to that of the deterministic case in \cite{Frank-Wolfe} and closely follows
 that proof structure.

\begin{thm} \label{thm:new-stepsize}  Using algorithm \eqref{eq:max1}-\eqref{eq:max2} with stepsize $\eta_t = 2/(t+2)$ yieds: 
$$ \expect{\phi(\gamma[t])} \geq \phi^{opt} - \frac{2G\norm{\mu^{max}}^2}{t+1} \quad, \forall t \in \{0, 1, 2, \ldots\} $$
\end{thm}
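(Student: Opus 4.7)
The plan is to reduce Theorem \ref{thm:new-stepsize} to a clean one-step recursion on the optimality gap, and then use a straightforward induction tailored to the stepsize $\eta_t = 2/(t+2)$. The key observation is that Lemma \ref{lem:performance} already encodes essentially everything we need; we just need to rewrite it in the right form.

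Define the gap $h_t = \phi^{opt} - \expect{\phi(\gamma[t])}$ for $t \geq -1$, noting $h_t \geq 0$ by Lemma \ref{lem:dominate} mentioned in the paper. My first step is to take Lemma \ref{lem:performance}, substitute $\expect{\phi(\gamma[t])} = \phi^{opt} - h_t$ and $\expect{\phi(\gamma[t-1])} = \phi^{opt} - h_{t-1}$, and simplify. After canceling the $\phi^{opt}/\eta_t$ terms on both sides and multiplying through by $\eta_t$, one obtains the clean contraction-style inequality
\begin{equation*}
h_t \leq (1 - \eta_t) h_{t-1} + \frac{\eta_t^2 G \norm{\mu^{max}}^2}{2}.
\end{equation*}

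Next I would specialize to $\eta_t = 2/(t+2)$, giving $h_t \leq \frac{t}{t+2} h_{t-1} + \frac{2 G \norm{\mu^{max}}^2}{(t+2)^2}$, and prove the bound $h_t \leq C/(t+1)$ with $C = 2G\norm{\mu^{max}}^2$ by induction on $t$. For the base case $t=0$, the factor $(1-\eta_0) = 0$ kills the $h_{-1}$ term, so the recursion immediately yields $h_0 \leq G \norm{\mu^{max}}^2 / 2 \leq C$. For the inductive step, assuming $h_{t-1} \leq C/t$, the recursion gives
\begin{equation*}
h_t \leq \frac{t}{t+2} \cdot \frac{C}{t} + \frac{C}{(t+2)^2} = \frac{C(t+3)}{(t+2)^2},
\end{equation*}
and the conclusion $h_t \leq C/(t+1)$ follows from the elementary inequality $(t+1)(t+3) = t^2 + 4t + 3 \leq t^2 + 4t + 4 = (t+2)^2$.

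There is no serious obstacle in this argument; the only thing to be slightly careful about is the boundary handling at $t=0$ (so that the $h_{-1}$ term, which involves $\gamma[-1] = 0$, does not propagate into the bound). That is precisely why the stepsize choice $\eta_0 = 1$ is convenient: it makes the inductive hypothesis at $t=-1$ unnecessary. The residual work — beyond the recursion and the induction — is just the algebraic manipulation turning Lemma \ref{lem:performance} into the contraction form, which mirrors the deterministic Frank-Wolfe analysis in \cite{Frank-Wolfe} but with expectations throughout and using the stochastic key inequality \eqref{eq:key} already packaged inside Lemma \ref{lem:compare}.
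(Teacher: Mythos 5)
Your proposal is correct and follows essentially the same route as the paper's proof: both convert Lemma \ref{lem:performance} into the gap recursion $a_t \leq (1-\eta_t)a_{t-1} + c\,\eta_t^2$ with $c = G\norm{\mu^{max}}^2/2$, substitute $\eta_t = 2/(t+2)$, and run the same induction with constant $4c = 2G\norm{\mu^{max}}^2$, closing via $(t+1)(t+3) \leq (t+2)^2$. The only cosmetic difference is your aside invoking Lemma \ref{lem:dominate} for nonnegativity of the gap (which is stated for EXP and is not needed here), but it plays no role in the argument.
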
 
\begin{proof} 
Define $c = G\norm{\mu^{max}}^2/2$.  Substituting $c$ into Lemma \ref{lem:performance} and multiplying both sides by $\eta_t$
gives
 $$ \expect{\phi(\gamma[t])} \geq \eta_t\phi^{opt} + (1-\eta_t)\expect{\phi(\gamma[t-1])} - c\eta_t^2 $$
 which holds for all $t \in \{0, 1, 2,\ldots\}$. 
 Define $a_t = \phi^{opt} - \expect{\phi(\gamma[t])}$.  Multiplying the above inequality by $-1$ and adding $\phi^{opt}$ to both sides gives: 
 $$ a_t \leq (1-\eta_t)a_{t-1} + c\eta_t^2 $$
 Substituting $\eta_t = 2/(t+2)$ gives
 \begin{equation} \label{eq:UNUSUAL} 
  a_t \leq \frac{t}{t+2}a_{t-1} + \frac{4c}{(t+2)^2}  \quad, \forall t \in \{0, 1, 2, \ldots\} 
  \end{equation} 
 It follows that $a_0 \leq c \leq 4c$. 
 Suppose there is an integer $k>0$ such that 
 $a_t \leq \frac{4c}{t+1}$ for all $t \in \{0, \ldots, k-1\}$ (it holds for $k-1=0$).  We prove 
 it also holds for time $t=k$. We have by \eqref{eq:UNUSUAL} together with $a_{k-1} \leq 4c/k$: 
\begin{align*}
 a_{k} &\leq \left(\frac{k}{k+2}\right)\frac{4c}{k} + \frac{4c}{(k+2)^2} \\
 &= \frac{4c(k+3)}{(k+2)^2} \\
 &\leq \frac{4c}{k+1} 
\end{align*} 
where the final inequality holds because $\frac{k+3}{(k+2)^2} \leq \frac{1}{k+1}$ for all positive integers $k$. 
By induction, it follows that $a_t \leq 4c/(t+1)$ for all $t \in \{0, 1, 2,\ldots\}$, which proves the result. 
\end{proof}

\subsection{Strongly concave utility functions} 

Consider again the RUN algorithm.  Assume the utility function   $\phi:\script{B}\rightarrow\mathbb{R}$ 
is smooth, concave, and  satisfies the assumptions of Section \ref{section:assumptions}.  Further, assume $\phi$ is 
\emph{$\alpha$-strongly concave}, meaning that: 
$\phi(\gamma) + \frac{\alpha}{2} \norm{\gamma}^2$ is also a concave function over $\gamma \in \script{B}$ (equivalently, $-\phi$ is an $\alpha$-strongly convex function).   Define $x^*$ as the (nonrandom) vector in the set $\overline{\Gamma}^*$ that 
corresponds to utility optimality for problem \eqref{eq:p1}-\eqref{eq:p2} (so that $\phi(x^*) = \phi^{opt}$). Let $\overline{\mu}[T] = \frac{1}{T}\sum_{t=0}^{T-1} \mu[t]$ be the (random) sample path time average over the first $T$ slots under the RUN algorithm. The \emph{mean square error} between $\overline{\mu}[T]$ and $x^*$ is: 
$$ \expect{\norm{\overline{\mu}[T] - x^*}^2} = \sum_{i=1}^n \expect{(\overline{\mu}_i[T] - x_i^*)^2} $$

\begin{thm} \label{thm:MSE}  If $\phi(\gamma)$ is $\alpha$-strongly concave over $\gamma \in \script{B}$, then for all $T>0$ we have:

a)  The RUN algorithm ensures
$$ \expect{\norm{\overline{\mu}[T] - x^*}^2} \leq \frac{G\norm{\mu^{max}}^2(1 + \log(T))}{\alpha T} $$

b) The EXP algorithm with parameter $\eta$ ensures 
$$  \expect{\norm{\overline{\mu}[T] - x^*}^2} \leq \frac{2(\phi^{opt} - \phi(0))}{\alpha \eta T} + \frac{\eta G\norm{\mu^{max}}^2}{\alpha}$$

c) The algorithm with stepsize $\eta_t=2/(t+2)$ ensures 
$$ \expect{\norm{\gamma[T] - x^*}^2} \leq \frac{4G\norm{\mu^{max}}^2}{\alpha(T+1)} $$
\end{thm}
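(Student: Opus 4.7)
The unifying idea is that $\alpha$-strong concavity turns any utility-gap bound into a mean-square-error bound via a quadratic growth inequality around $x^*$. My plan is to first establish this one master inequality, then plug in each of Theorems \ref{thm:RUN}, \ref{thm:EXP}, and \ref{thm:new-stepsize} to obtain (a), (b), (c) respectively.

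The master step is to show that whenever $Y$ is a random vector in $\script{B}$ with $\expect{Y} \in \overline{\Gamma}^*$, we have
\begin{equation*}
\expect{\norm{Y - x^*}^2} \leq \frac{2}{\alpha}\bigl(\phi^{opt} - \expect{\phi(Y)}\bigr).
\end{equation*}
To derive this, I would start from the standard gradient characterization of $\alpha$-strong concavity, $\phi(y) \leq \phi(x^*) + \phi'(x^*)^{\transpose}(y - x^*) - \frac{\alpha}{2}\norm{y - x^*}^2$, evaluate at $y = Y$, and take expectations. By the first-order optimality condition for the concave maximization \eqref{eq:optimality} of $\phi$ over the convex set $\overline{\Gamma}^*$, we have $\phi'(x^*)^{\transpose}(z - x^*) \leq 0$ for every $z \in \overline{\Gamma}^*$, and in particular for $z = \expect{Y}$. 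That kills the linear term and leaves exactly the displayed inequality.

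Next I must verify the hypothesis $\expect{Y} \in \overline{\Gamma}^*$ for the relevant $Y$ in each part. For parts (a) and (b), $Y = \overline{\mu}[T]$; for part (c), $Y = \gamma[T]$, which (as the weights $\eta_t = 2/(t+2)$ produce the convex combinations \eqref{eq:unusual1}--\eqref{eq:unusual4}) is itself a convex combination of $\mu[0], \ldots, \mu[T]$ with weights summing to one. In every case $Y$ is a convex combination of the $\mu[t]$'s, so by linearity $\expect{Y}$ is a convex combination of $\expect{\mu[t]}$. For each $t$, conditioning on the history $\mathcal{H}_{t-1}$ (which determines $\gamma[t-1]$) leaves $\mu[t]$ as a measurable function of $S[t]$, and since $S[t]$ is independent of $\mathcal{H}_{t-1}$ and identically distributed, the conditional expectation $\expect{\mu[t]\mid \mathcal{H}_{t-1}}$ lies in $\Gamma^*$. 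Hence $\expect{\mu[t]} \in \overline{\Gamma}^*$ by convexity and closure, and so does $\expect{Y}$.

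With the master inequality in hand, each part follows by substitution. For (a), the bound in Theorem \ref{thm:RUN} gives $\phi^{opt} - \expect{\phi(\overline{\mu}[T])} \leq G\norm{\mu^{max}}^2(1+\log T)/(2T)$, and multiplying by $2/\alpha$ yields the claim. For (b), Theorem \ref{thm:EXP} gives the utility gap bound $(\phi^{opt} - \phi(0))/(\eta T) + \eta G\norm{\mu^{max}}^2/2$, and the $2/\alpha$ multiplier produces exactly the stated expression. For (c), Theorem \ref{thm:new-stepsize} gives $\phi^{opt} - \expect{\phi(\gamma[T])} \leq 2G\norm{\mu^{max}}^2/(T+1)$, and the factor $2/\alpha$ yields $4G\norm{\mu^{max}}^2/(\alpha(T+1))$. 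I expect the main subtlety to lie not in any calculation but in the careful justification that the expectations of $\overline{\mu}[T]$ and $\gamma[T]$ sit inside $\overline{\Gamma}^*$ despite the adaptive, history-dependent nature of the rule \eqref{eq:max1}--\eqref{eq:max2}; once that measurability-plus-convexity argument is in place, the optimality condition $\phi'(x^*)^{\transpose}(z - x^*) \leq 0$ does all the work.
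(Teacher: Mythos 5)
Your proposal is correct and follows essentially the same route as the paper's proof: strong concavity at $x^*$, expectation, the first-order optimality condition $\phi'(x^*)^{\transpose}(\expect{Y}-x^*)\leq 0$ killing the linear term, then substitution of Theorems \ref{thm:RUN}, \ref{thm:EXP}, and \ref{thm:new-stepsize}. The only differences are cosmetic and favorable: you package the argument as a single master inequality applied three times (where the paper argues for $\overline{\mu}[T]$ and says part (c) is ``similar''), and you spell out the conditioning-on-history argument for why $\expect{\mu[t]} \in \overline{\Gamma}^*$, a step the paper asserts without detail.
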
 
\begin{proof} 
Fix $T>0$. 
Recall that both the sample path time average $\overline{\mu}[T]$ and the optimal vector $x^*$ lie in the set $\script{B}$. The following inequality holds for any $\alpha$-strongly concave function evaluated at two points of its domain \cite{nesterov-book}:  
\begin{equation} \label{eq:start} 
\phi(\overline{\mu}[T]) \leq \phi(x^*) + \phi'(x^*)^{\transpose} (\overline{\mu}[T]-x^*) - \frac{\alpha}{2}\norm{\overline{\mu}[T]-x^*}^2 
\end{equation} 
where $\phi'(x^*)$ is a subgradient of $\phi$ at the point $x^*$. Taking expectations of both sides gives: 
$$ \expect{\phi(\overline{\mu}[T])} \leq \phi(x^*) + \phi'(x^*)^{\transpose}(\expect{\overline{\mu}[T]}-x^*) - \frac{\alpha}{2}\expect{\norm{\overline{\mu}[T]-x^*}^2} $$
Now note that $\expect{\overline{\mu}[T]}$ is a convex combination of points in the convex set 
$\overline{\Gamma}^*$ and hence 
lies in the set $\overline{\Gamma}^*$. Since $x^*$ maximizes the utility function $\phi$ over all other vectors in $\overline{\Gamma}^*$, 
the standard first order optimality condition requires: 
$$ \phi'(x^*)^{\transpose}(\expect{\overline{\mu}[T]}-x^*) \leq 0 $$
Substituting this inequality into the previous one gives: 
$$ \expect{\phi(\overline{\mu}[T])} \leq \phi(x^*) - \frac{\alpha}{2}\expect{\norm{\overline{\mu}[T]-x^*}^2} $$
Rearranging terms and using Theorem \ref{thm:RUN} yields the result of part (a), while using Theorem \ref{thm:EXP} yields the result of part (b).  Part (c) follows by a similar analysis that starts by comparing $\gamma[T]$ and $x^*$ in an inequality similar to \eqref{eq:start} (rather than comparing $\overline{\mu}[T]$ and $x^*$) and then using Theorem \ref{thm:new-stepsize}. 
\end{proof} 

The performance bound in the above theorem can be appreciated as follows: Recall that if $\{X_i\}_{i=0}^{\infty}$ is a sequence of independent and identically distributed (i.i.d.) random variables with finite mean and variance given by $m = \expect{X_i}$ and $\sigma^2 = Var(X_i)$, then the mean square error between the sample average $\frac{1}{T}\sum_{i=0}^{T-1} X_i$ and the mean $m$ is equal to: 
$$ \expect{\left(\frac{1}{T}\sum_{i=0}^{T-1} X_i - m\right)^2} = \frac{Var(X_0)}{T} $$
Hence, the mean square error is inversely proportional to the number of samples.  Theorem \ref{thm:MSE} shows that, under RUN, the mean square error between the sample path transmission rate and the optimal time averaged rate $x^*$ has a similar decay (differing only by a log factor).  This is remarkable because the network utility maximization problem involves joint estimation, learning, and control, and is much more complex than simply time averaging i.i.d. random variables. 

\section{A stochastic converse result}

This section provides a simple example of an opportunistic 
scheduling system, together with a smooth and strongly concave utility function, 
such that all statistics-unaware algorithms have a utility optimality 
gap that is at least $\Omega(1/t)$, where $t$ is the number
 of time steps.   This converse bound is close to the $O(\log(t)/t)$ optimality gap achievable by the RUN algorithm (as shown in
 the previous section).  
Hence, RUN is a statistics-unaware algorithm with 
an asymptotic convergence rate that is at most a logarithmic factor away from optimality.  

 \subsection{A 2-user system with ON/OFF channels} 
 
 Consider a 2-user system with an i.i.d. channel state process $\{S[t]\}_{t=0}^{\infty}$. Suppose there are only three possible channel state vectors,  so that $S[t] \in \{(ON, OFF), (ON, ON), (OFF, ON)\}$.  Every slot $t$, the network controller observes $S[t]$ and chooses to either transmit over exactly one channel that is currently ON, or to remain idle. The corresponding decision sets are: 
 \begin{align*}
 S[t] = (ON, OFF) &\implies (\mu_1(t), \mu_2(t)) \in \{(0,0), (1,0) \}\\
 S[t] = (ON, ON) &\implies (\mu_1(t), \mu_2(t)) \in \{(0,0), (1,0), (0,1)\} \\
 S[t] = (OFF, ON) &\implies (\mu_1(t), \mu_2(t)) \in \{(0,0), (0,1)\} 
 \end{align*}
 Define the utility function $\phi:[0,1]^2 \rightarrow \mathbb{R}$ by 
 $$ \phi(\gamma_1, \gamma_2) = \log(1 + \gamma_1) + \log(1+\gamma_2) $$
 It can be shown that $\phi$ is smooth and strongly concave over its domain.\footnote{The proportionally fair utility function $\log(\gamma_1)+\log(\gamma_2)$ could be similarly considered, although this has singularities at $\gamma_i=0$.}   Since $\phi$ is entrywise increasing, efficient algorithms should transmit whenever there is at least one ON channel. The only non-trivial decision is which channel to choose when $S[t] = (ON, ON)$.  Consider a particular \emph{statistics-unaware} algorithm $\pi$ that transmits whenever there is at least one ON channel, and if $S[t] \in (ON, ON)$ it chooses between the two transmission vectors $(1,0)$ and $(0,1)$ according to some (possibly randomized) policy.   Like the RUN, EXP, and DPP algorithms, the algorithm $\pi$ has no initial knowledge of the probability mass function for $S[t]$ and can only base decisions on current and past observations.   One can imagine that algorithm $\pi$ is chosen \emph{first}, then a probability mass function (PMF) for $S[t]$ is chosen by nature.  Nature is free to 
 choose a PMF under which policy $\pi$ performs poorly.  Consider two different PMFs, labeled PMF A and PMF B 
 in Table \ref{tab:1}.
 
 \begin{table}[htbp]
    \centering
    \begin{tabular}{@{} lcr @{}} 
       \multicolumn{2}{c}{} \\
            $S[t]$   & PMF A &  PMF B\\
       \midrule
       (ON, OFF)      & $3/4$ & $0$ \\
       (ON, ON)       & $1/4$  & $1/4$ \\
       (OFF, ON)     & $0$  & $3/4$ \\
       \bottomrule
    \end{tabular}
    \caption{Values for PMF A and PMF B.}
    \label{tab:1}
 \end{table}

 On slot $t=0$, the algorithm $\pi$  must have a contingency plan for choosing $(\mu_1[0], \mu_2[0])$ if it observes $S[0]=(ON, ON)$. 
Define: 
$$ \theta = P[(\mu_1[0], \mu_2[0]) = (1,0) | S[0]=(ON, ON)] $$
where this conditional probability $\theta$ is determined by the (potentially randomized) decision of algorithm $\pi$ on slot $0$, and is not connected to any past observations. 
In particular, the value of $\theta$ is determined before nature chooses the PMF.
 
  Below we show that, once the algorithm $\pi$ is chosen (which fixes the value of $\theta$), nature can choose a PMF 
 such that:  
  $$ \phi(\expect{\overline{\mu}_1[T]}, \expect{\overline{\mu}_2[T]})  \leq \phi^{opt} - \frac{1}{35 T} \quad, \forall T \in \{2, 3, 4, ...\} $$
 where the left-hand-side represents the utility achieved by algorithm $\pi$ over the first $T$ slots, and $\phi^{opt}$ is the optimal utility of the network under the PMF that was chosen by nature.  
 
\subsection{Case 1: $\theta \in [1/2, 1]$}  

\begin{figure}[htbp]
   \centering
   \includegraphics[height=1.5in]{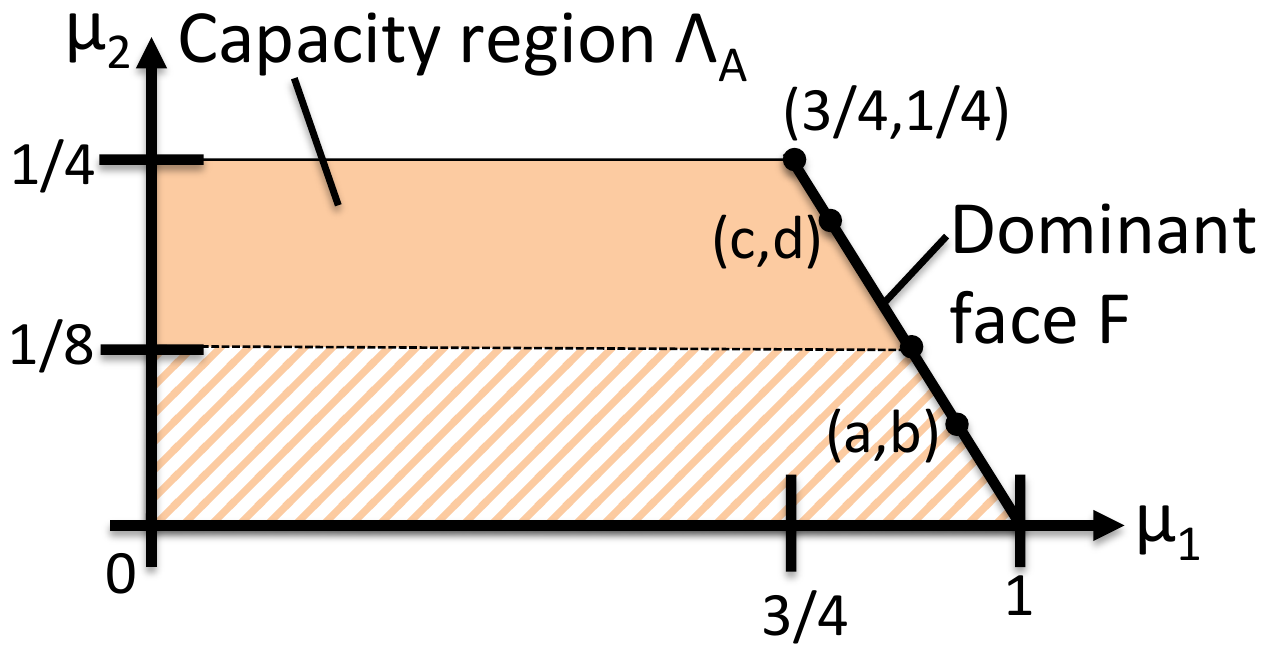} 
   \caption{The capacity region $\Lambda_A$ under PMF A.  All algorithms that transmit whenever possible have average rates that lie on the dominant face $F$. The point $(a,b)$ must lie in the intersection of the shaded region with the dominant
   face $F$.}
   \label{fig:PMFA}
\end{figure}

 Suppose $\theta \in [1/2,1]$.  Suppose nature chooses PMF A. Fix $T \in \{2, 3, 4, ...\}$.  
Define vectors $(a,b)$ and $(c,d)$ by 
 \begin{align}
 (a,b) &= \expect{(\mu_1[0], \mu_2[0])} \nonumber \\
 (c,d) &= \frac{1}{T-1}\sum_{t=1}^{T-1} \expect{(\mu_1[t], \mu_2[t])} \label{eq:cd} 
 \end{align}
 where the expectations are with respect to the random $S[t]$ channels that arise over time (which occur according to PMF A) and the possibly random decisions of policy $\pi$ in reaction to the observed channels. 
 We have: 
 \begin{equation} \label{eq:abcd} 
 (\expect{\overline{\mu}_1[T]}, \expect{\overline{\mu}_2[T]})  = \frac{1}{T}(a,b) + \frac{T-1}{T}(c,d)
 \end{equation} 
 Note that $(c,d)$ must be a point in the \emph{capacity region} $\Lambda_A$ that corresponds to PMF A, as shown in Fig. \ref{fig:PMFA} (this is because $\expect{(\mu_1[t], \mu_2[t])} \in \Lambda$ for all slots $t$, and so 
 $(c,d)$ defined in \eqref{eq:cd}  is a convex combination of points in the convex set $\Lambda$ and hence must also be in  $\Lambda$). 
 Define $F$ as the \emph{dominant face} of $\Lambda_A$, being the line segment in Fig. \ref{fig:PMFA} between 
 points $(3/4,1/4)$ and $(1,0)$.  Let $(\tilde{c}, \tilde{d})$ be a point on $F$ that is entrywise greater than or equal to $(c,d)$ (possibly being $(c,d)$ itself). 
 It can be shown that optimal utility is achieved at the corner point $(3/4, 1/4) \in \Lambda_A$, so that: 
 $$ \phi^{opt} = \log(1+ 3/4) + \log(1+1/4) $$
 Under PMF A, the point $(a,b) = \expect{(\mu_1[0], \mu_2[0])}$ satisfies: 
$$
 (a,b) = \underbrace{P[S[t]=(ON, OFF)]}_{3/4}(1,0) + \underbrace{P[S[t]=(ON,ON)]}_{1/4}[\theta(1,0) + (1-\theta)(0,1)]  $$
 That is, $(a,b) = \frac{1}{4}(3 + \theta, 1-\theta)$.   In particular, $a+b=1$, $(a,b) \in F$, 
 and since $\theta \in [1/2, 1]$ it holds that $b \leq 1/8$.  Thus, $(a,b)$ lies in the intersection of the shaded region of Fig. \ref{fig:PMFA} with the dominant face $F$. 
 Then, 
 \begin{align*}
 \phi\left(\expect{\overline{\mu}_1[T]}, \expect{\overline{\mu}_1[T]}\right) &\overset{(a)}{=} \log(1 + \frac{a}{T} + \frac{(T-1)c}{T}) + \log(1 + \frac{b}{T} + \frac{(T-1)d}{T}) \\
 &\overset{(b)}{\leq}  \log(1 + \frac{a}{T} + \frac{(T-1)\tilde{c}}{T}) + \log(1 + \frac{b}{T} + \frac{(T-1)\tilde{d}}{T})\\
  &\overset{(c)}{\leq}  \max_{(x,y) \in F}\left[\log(1 + \frac{a}{T} + \frac{(T-1)x}{T}) + \log(1 + \frac{b}{T} + \frac{(T-1)y}{T})\right]\\
 &\overset{(d)}{=} \log( 1+ \frac{a}{T} + \frac{(T-1)(3/4)}{T}) + \log(1 + \frac{b}{T} + \frac{(T-1)(1/4)}{T})\\
 &= \log(1 + 3/4 + \frac{(a-3/4)}{T}) + \log(1 + 1/4 + \frac{(b-1/4)}{T}) \\
 &\overset{(e)}{\leq} \log(1 + 3/4)  + \frac{a-3/4}{(1+3/4)T} + \log(1 + 1/4)  + \frac{b-1/4}{(1+1/4)T}\\
 &\overset{(f)}{=} \phi^{opt} - \frac{(1/4-b)(8/35)}{T} \\
 &\overset{(g)}{\leq} \phi^{opt} - \frac{1}{35 T} 
 \end{align*}
 where (a) holds by substituting \eqref{eq:abcd} into the utility function $\phi(\gamma_1, \gamma_2) = \log(1+\gamma_1)+\log(1+\gamma_2)$; (b) holds because $(\tilde{c}, \tilde{d})$ is entrywise greater than or equal to  $(c,d)$ and the utility function is entrywise increasing; 
 (c) holds because $(\tilde{c}, \tilde{d}) \in F$; (d) holds because the $(x,y)$ vector that maximizes the given expression over $F$ is $(x^*, y^*)=(3/4,1/4)$, which can be proven by observing that (i)  $(a,b) \in F$ and so for any $(x,y) \in F$ we have 
 $(a,b)/T + (x,y)(T-1)/T \in F$,  (ii) utility 
 increases as we move along the dominate face towards the corner point $(3/4,1/4)$, and  so the $(x,y)$ vector that maximizes
 the given expression over $F$ is $(3/4,1/4)$; (e)  holds because concavity of the function $\log(w+z)$ with respect to $z$ 
 implies $\log(w+z) \leq \log(w) + \frac{z}{w}$ for any real numbers $w,z$ that satisfy $w>0, w+z>0$; (f) holds because $a=1-b$; (g) holds because $b \leq 1/8$. 
 
 \subsection{Case 2: $\theta \in [0, 1/2)$} 
 
 \begin{figure}[htbp]
   \centering
   \includegraphics[width=2in]{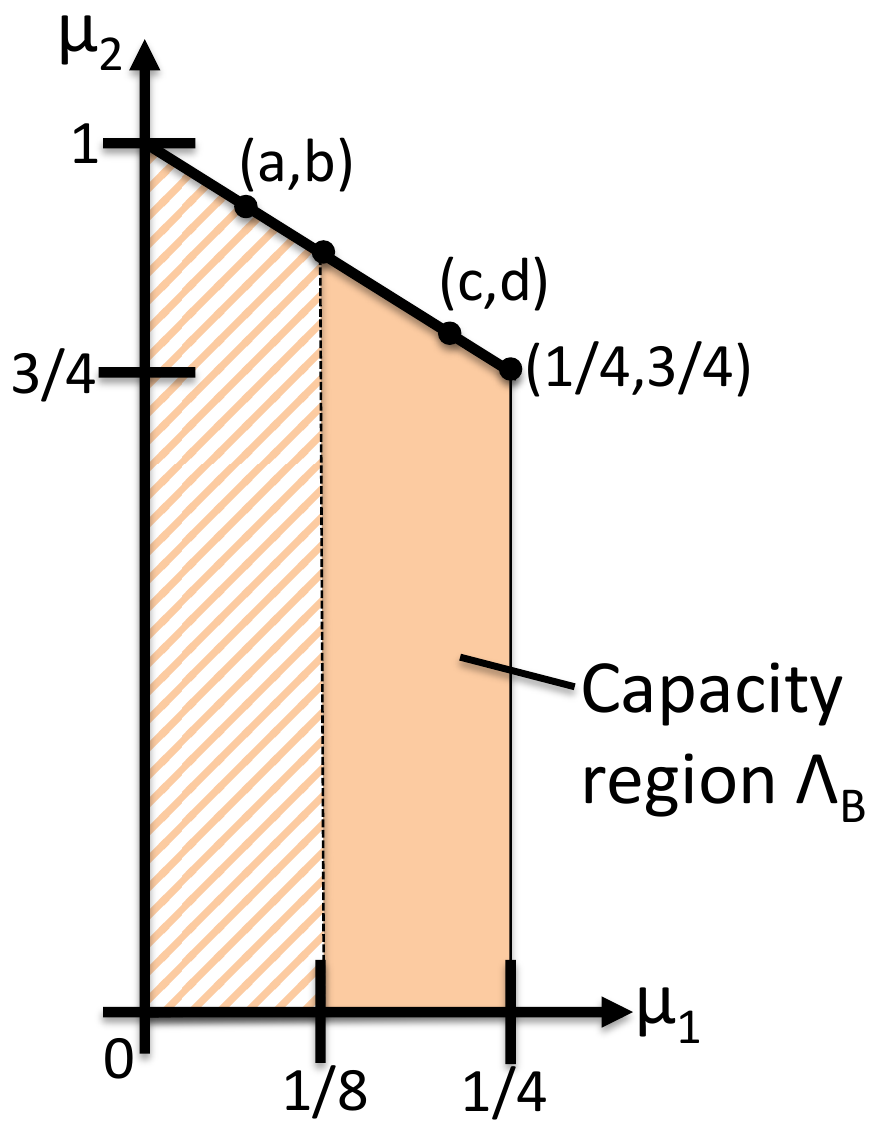} 
   \caption{The capacity region $\Lambda_B$ under PMF B. This is a symmetric flip of $\Lambda_A$.}
   \label{fig:PMFB}
\end{figure}
   
 Suppose $\theta \in [0, 1/2)$. However, now suppose nature chooses PMF B. The resulting capacity region $\Lambda_B$ is shown in Fig. \ref{fig:PMFB}.  Defining $(a,b)$ and $(c,d)$ as before, it can be shown that $(c,d) \in \Lambda_B$ and 
 $(a,b)$ is in the intersection of the shaded portion of $\Lambda_B$ with its dominant face (see Fig \ref{fig:PMFB}).  The situation is ``symmetric'' to that of Case 1 and a similar argument proves:
 $$  \phi\left(\expect{\overline{\mu}_1[T]}, \expect{\overline{\mu}_1[T]}\right)  \leq \phi^{opt} - \frac{1}{35T} $$ 
In particular, under either Case 1 or Case 2, a PMF can be chosen for which the optimality gap is at least $1/(35T)$.  It is impossible
for any statistics-unaware algorithm to ensure an optimality gap that decays faster than $1/(35T)$. 

\section{Scheduling in deterministic systems} \label{section:deterministic} 

Theorems \ref{thm:RUN}-\ref{thm:MSE} hold for general stochastic problems.  A special case of a 
stochastic system is a deterministic system where $\mu[t]$ is chosen from the same closed and bounded (possibly nonconvex)
set $\Gamma$ every slot $t$.   
In this deterministic case, the expectations in Theorems \ref{thm:RUN}-\ref{thm:MSE} 
can be removed (since all expectations are equal to their arguments with probability 1). 
If $\Gamma$ is a nonconvex set then utility optimality typically requires different points in $\Gamma$ to be selected with 
different fractions of time.  The implementation of the algorithm over time $\{0, 1, \ldots, T-1\}$ specifies how often 
each different rate vector should be chosen. 

\subsection{Deterministic RUN} 

In this deterministic case, Theorem \ref{thm:RUN} ensures the RUN algorithm deterministically yields 
$$ \phi(\overline{\mu}[T]) \geq \phi^{opt} - \frac{G\norm{\mu^{max}}^2(1+\log(T))}{2T} \quad, \forall T \in \{1, 2, 3, \ldots\} $$ 
where $\overline{\mu}[T] = \frac{1}{T}\sum_{t=0}^{T-1} \mu[t]$.  Further, if the utility function is additionally $\alpha$-strongly convex
then Theorem \ref{thm:MSE} proves that RUN gives: 
$$ \norm{\overline{\mu}[T] - x^*}^2 \leq \frac{G\norm{\mu^{max}}^2(1+\log(T))}{\alpha T} \quad, \forall T \in \{1, 2, 3, \ldots\} $$

This is useful for online implementation in the deterministic system.  It is also useful for \emph{offline computation}:  Fix $\epsilon>0$ and choose the smallest integer $T$ so that $\frac{G\norm{\mu^{max}}^2(1+\log(T))}{2T} \leq \epsilon$. Thus,  $T = \Theta(\log(1/\epsilon)/\epsilon)$. Run the algorithm over slots $t \in \{0, 1, \ldots, T-1\}$, observe what $\mu[t]$ vectors are chosen during this time, and define fractions of time for choosing each vector according to the fractions of time they are used over the interval $\{0, 1, \ldots, T-1\}$. This offline computation requires $\Theta(\log(1/\epsilon)/\epsilon)$ iterations.

\subsection{Deterministic $\eta_t = 2/(t+2)$}

The stepsize $\eta_t = 2/(t+2)$ in Theorem \ref{thm:new-stepsize} can be used to improve offline computation time using the unusual weighted average of Section \ref{section:unusual}. Indeed, Theorem \ref{thm:new-stepsize} for this deterministic context gives: 
$$ \phi(\gamma[T]) \geq \phi^{opt} - \frac{2G\norm{\mu^{max}}^2}{T+1} \quad, \forall T \in \{0, 1, 2, 3, \ldots\}  $$ 
Further, if the utility function is additionally $\alpha$-strongly convex then Theorem \ref{thm:MSE} proves 
$$ \norm{\gamma[T] - x^*}^2 \leq \frac{4G\norm{\mu^{max}}^2}{\alpha(T+1)}  \quad, \forall T \in \{0, 1, 2, 3 \ldots\}$$
This is useful for offline computation but requires the implemented $\mu[t]$ decisions to be \emph{reweighted} at the end of the run of $T+1$ slots.  Specifically, fix $T>0$ and run the algorithm over slots $t \in \{0, 1, 2, \ldots, T\}$.  Observe the resulting vectors $\{\mu[0], \mu[1], \ldots, \mu[T]\}$.  The vector $\gamma[T]$ is a convex combination of these vectors.   However, the convex combination must be computed according to the unusual weights shown in the example computations \eqref{eq:unusual1}-\eqref{eq:unusual4}.  Specifically, we have 
\begin{equation} 
\gamma[T] = \sum_{t=0}^{T} w[t]\mu[t] \label{eq:fractions-of-time} 
\end{equation} 
where the weights $w[t]$ are nonnegative and satisfy $\sum_{t=0}^T w[t]=1$.  The value $w[t]$ determines the correct fraction of time to use vector $\mu[t] \in \Gamma$ in order to achieve the time average vector $\gamma[T]$ in \eqref{eq:fractions-of-time}.  The weights $w[t]$ 
 can be determined by the following iterative procedure that grows a vector $\vec{M}_k$ by one dimension on each step:  
\begin{itemize}
\item Define $\vec{M}_0 = [1]$.  
\item At step $k \in \{1, \ldots, T\}$, define $\vec{M}_k = \left[\left(\frac{k}{k+2}\right)\vec{M}_{k-1}; \frac{2}{k+2}\right]$. 
\end{itemize}
At step $T$, the vector $\vec{M}_T$ has $T+1$ dimensions with components given by the desired $w[t]$ values: 
$$ \vec{M}_T = [w[0]; w[1]; \ldots; w[T]] $$
For example, the first few steps of this procedure give the following weights that correspond to \eqref{eq:unusual1}-\eqref{eq:unusual4}: 
\begin{align*}
\vec{M}_0 &= \left[1\right]\\
\vec{M}_1 &= \left[\frac{1}{3}; \frac{2}{3}\right]\\
\vec{M}_2 &=\left[\frac{1}{6}; \frac{1}{3}; \frac{1}{2}\right]\\
\vec{M}_3 &= \left[\frac{1}{10};\frac{1}{5}; \frac{3}{10};\frac{2}{5}\right]
\end{align*}

\section{Conclusion} 

This paper considers stochastic utility maximization for opportunistic scheduling systems. 
It shows that  all  statistics-unaware algorithms incur error that is at least $\Omega(1/t)$ 
after $t$ slots.  
A stochastic variation of the 
Frank-Wolfe algorithm called RUN is shown to achieve error that decays like $O(\log(t)/t)$.   Unfortunately, RUN uses a vanishing
stepsize and has no adaptation capabilities.  The EXP algorithm uses a fixed stepsize for better
adaptation but worse convergence time.  Specifically, EXP is shown to achieve an $O(\epsilon)$-approximation 
with convergence and adaptation times of $O(1/\epsilon^2)$, similar to the DPP algorithm. 
A variation of Franke-Wolfe that uses a (vanishing) stepsize different from RUN and EXP is shown to compute a random vector whose expectation is within $O(1/t)$ of optimal utility (without a $\log$ factor), although this random vector does not correspond to the time average transmission rates used over the first $t$ slots. 
 In terms of convergence time, 
it is unclear how the gap can be closed between  the $O(\log(t)/t)$ achievability bound and the $\Omega(1/t)$ converse.  It is also unclear if 
algorithms exist that improve both convergence and adaptation times beyond $O(1/\epsilon^2)$. 

\section*{Appendix} 

\begin{lem} Fix $\eta_t = \eta$ for all $t \in \{0, 1, 2, \ldots\}$ and for some $\eta \in (0,1)$. 
The iteration \eqref{eq:weighted-average} ensures: 
$$ \gamma[t] = (1-\eta)^{t+1} \gamma[-1] + \sum_{k=0}^t \eta (1-\eta)^{k}\mu[t-k]  \quad , \forall 
t \in \{0, 1, 2, \ldots\} $$
In particular, since $\gamma[-1]=0 \leq \mu[0]$, we have that $\gamma[t]$ is entrywise less
than or equal to the following exponentially weighted average of the $\{\mu[t]\}$ process: 
$$ \gamma[t] \leq (1-\eta)^{t+1} \mu[0] + \sum_{k=0}^t \eta (1-\eta)^{k}\mu[t-k] \quad , \forall 
t \in \{0, 1, 2, \ldots\} $$
\end{lem}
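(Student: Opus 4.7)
The plan is to prove the identity by straightforward induction on $t$ and then derive the entrywise inequality from nonnegativity of $\mu[0]$. Since the recurrence \eqref{eq:weighted-average} with $\eta_t=\eta$ reduces to the linear first-order recursion $\gamma[t] = (1-\eta)\gamma[t-1] + \eta\mu[t]$, unrolling it is essentially a geometric-series computation, and no new ideas beyond bookkeeping are required.

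For the base case $t=0$, I would simply plug $t=0$ into the recurrence to get $\gamma[0] = (1-\eta)\gamma[-1] + \eta\mu[0]$, which matches the claimed closed-form $(1-\eta)^{1}\gamma[-1] + \sum_{k=0}^{0}\eta(1-\eta)^{k}\mu[0-k]$. For the inductive step, assuming the formula holds at $t-1$, I would substitute it into $\gamma[t] = (1-\eta)\gamma[t-1] + \eta\mu[t]$. The $(1-\eta)\gamma[t-1]$ term produces $(1-\eta)^{t+1}\gamma[-1]$ together with $\sum_{k=0}^{t-1}\eta(1-\eta)^{k+1}\mu[t-1-k]$; reindexing the latter sum by $j=k+1$ gives $\sum_{j=1}^{t}\eta(1-\eta)^{j}\mu[t-j]$. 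Adding the fresh term $\eta\mu[t]$ (which is precisely the $j=0$ term of the target sum) completes the identity.

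For the second claim, I would use that $\mu[0] \in \script{B} = [0,\mu_1^{max}] \times \cdots \times [0,\mu_n^{max}]$ and hence $\mu[0] \geq 0$ entrywise, together with $\gamma[-1] = 0$. Since $(1-\eta)^{t+1} > 0$, replacing the coefficient $(1-\eta)^{t+1}\gamma[-1] = 0$ by $(1-\eta)^{t+1}\mu[0] \geq 0$ only increases each component, while the remaining sum is identical. This yields the claimed entrywise inequality.

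The only even mildly delicate point is the reindexing of the geometric sum inside the induction, and the reliance on componentwise nonnegativity of $\mu[0]$ (guaranteed by the assumption \eqref{eq:box} on $\script{B}$). There is no real obstacle here; the lemma is essentially a restatement of the closed-form solution of a scalar linear recursion applied componentwise.
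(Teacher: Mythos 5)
Your proof is correct and follows exactly the route the paper intends: the paper's own proof is just the one-line remark that the identity ``follows by induction on \eqref{eq:weighted-average},'' and your write-up supplies precisely that induction (base case, reindexed geometric sum, and the entrywise inequality from $\gamma[-1]=0$ and $\mu[0]\geq 0$). Nothing is missing; you have simply made explicit what the paper leaves to the reader.
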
 
\begin{proof} 
The proof follows by induction on \eqref{eq:weighted-average}. 
\end{proof} 

\begin{lem} \label{lem:dominate}  The EXP algorithm ensures $\expect{\phi(\gamma[t])} \leq \phi^{opt}$ for all $t \in \{0, 1, 2, \ldots\}$. 
\end{lem}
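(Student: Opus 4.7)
The plan is to chain together three facts: the explicit formula for $\gamma[t]$ from the first appendix lemma, Jensen's inequality for the concave $\phi$, and the entrywise monotonicity of $\phi$. First I would apply the preceding appendix lemma with $\gamma[-1]=0$ to write
\[
\gamma[t] = \sum_{k=0}^{t} \eta(1-\eta)^k \mu[t-k],
\]
and note that the coefficients are nonnegative and sum to $\Lambda_t := 1 - (1-\eta)^{t+1} \in (0,1)$. Taking expectations,
\[
\expect{\gamma[t]} = \Lambda_t \cdot y_t, \qquad y_t := \frac{1}{\Lambda_t}\sum_{k=0}^{t} \eta(1-\eta)^k \expect{\mu[t-k]}.
\]
Since each $\mu[\tau]$ lies in $\script{B}$, each expectation is nonnegative entrywise, and hence so is $y_t$.

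Next I would argue that $y_t \in \overline{\Gamma}^*$. For this it suffices to show $\expect{\mu[\tau]} \in \Gamma^*$ for every $\tau$, since $\Gamma^*$ is convex and the definition of $y_t$ is a convex combination of such expectations. This is the one conceptually delicate point: although $\mu[\tau]$ under EXP depends on the past through $\gamma[\tau-1]$, after averaging over the history the conditional law of $\mu[\tau]$ given $S[\tau]=s$ is a genuine probability distribution on $\Gamma_s$. Because $S[\tau]$ has the same marginal distribution as $S[0]$, this induced conditional law describes a stationary randomized policy in reaction to $S[0]$, so its overall expectation $\expect{\mu[\tau]}$ lies in $\Gamma^*$ by the definition of $\Gamma^*$. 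This step is the main obstacle I expect to spend care on; once granted, $y_t \in \Gamma^* \subseteq \overline{\Gamma}^*$.

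With those pieces in hand the conclusion follows in three lines. By Jensen's inequality applied to the concave $\phi$,
\[
\expect{\phi(\gamma[t])} \leq \phi\bigl(\expect{\gamma[t]}\bigr) = \phi(\Lambda_t y_t).
\]
Since $\Lambda_t \in (0,1)$ and $y_t \geq 0$ entrywise, we have $\Lambda_t y_t \leq y_t$ entrywise, so entrywise nondecreasingness of $\phi$ yields $\phi(\Lambda_t y_t) \leq \phi(y_t)$. Finally, $y_t \in \overline{\Gamma}^*$ combined with the characterization $\phi^{opt} = \max_{x \in \overline{\Gamma}^*} \phi(x)$ from \eqref{eq:optimality} gives $\phi(y_t) \leq \phi^{opt}$, completing the proof. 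The argument is short but leans essentially on both structural assumptions on $\phi$ (concavity for Jensen, monotonicity to absorb the shortfall $\Lambda_t < 1$ coming from the zero initialization $\gamma[-1]=0$).
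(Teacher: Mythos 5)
Your proof is correct and takes essentially the same route as the paper's: the explicit appendix formula for $\gamma[t]$, Jensen's inequality for the concave $\phi$, entrywise monotonicity, the fact that each one-shot expectation $\expect{\mu[\tau]}$ lies in $\Gamma^*$, convexity of $\Gamma^*$, and the characterization \eqref{eq:optimality}. The only difference is in how the zero initialization is absorbed --- the paper replaces $\gamma[-1]=0$ by $\mu[0]$ at the sample-path level so the weights sum to one, while you keep the deficient weights, renormalize, and invoke monotonicity at the expectation level --- which is the same pair of tools applied in the opposite order.
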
 
\begin{proof}  Fix $t \in \{0, 1, 2, \ldots\}$. 
From the above lemma and the entrywise nondecreasing property of $\phi$ we have: 
$$ \phi(\gamma[t]) \leq \phi\left(  (1-\eta)^{t+1} \mu[0] + \sum_{k=0}^t \eta (1-\eta)^{k}\mu[t-k] \right) $$
Taking expectations of both sides and using Jensen's inequality in the right-hand-side gives: 
\begin{align*} 
\expect{\phi(\gamma[t])} &\leq \phi\left(  (1-\eta)^{t+1} \expect{\mu[0]} + \sum_{k=0}^t \eta (1-\eta)^{k}\expect{\mu[t-k]}\right) \\
&=  \phi\left(  (1-\eta)^{t+1}x_0 + \sum_{k=0}^t \eta (1-\eta)^{k}x_{t-k}\right) 
\end{align*} 
where we define $x_t = \expect{\mu[t]}$ for each $t \in \{0, 1, 2, \ldots\}$.  Define the vector $y$ by: 
$$ y =  (1-\eta)^{t+1}x_0 + \sum_{k=0}^t \eta (1-\eta)^{k}x_{t-k} $$
so that we have $\expect{\phi(\gamma[t])} \leq \phi(y)$. 
Note that $x_t$ is a one-shot expectation 
of $\mu[t]$ on slot $t$, and hence must lie in the set $\Gamma^*$ (since all expectations that can be achieved on slot $t$ can also be achieved on slot $0$).  Hence, the vector $y$ is a convex combination 
of vectors in the convex set $\Gamma^*$, and so $y \in \Gamma^* \subseteq \overline{\Gamma}^*$.  Thus: 
$$
\expect{\phi(\gamma[t])} \leq \phi(y) 
\leq \max_{x \in \overline{\Gamma}^*} \phi(x) =\phi^{opt}
$$
where the last equality follows by  \eqref{eq:optimality}.  
\end{proof}

\bibliographystyle{unsrt}
\bibliography{refs}
\end{document}